\newcommand{\p}{{\partial}} 
\newcommand{\R}{{\mathbb R}} 
\newcommand{\N}{{\mathbb N}}
\newcommand{\D}{{\mathcal D}} 
\def \beq {\begin {eqnarray}}
\def \eeq {\end {eqnarray}}
\def \ba {\begin {eqnarray*} }
\def \ea {\end {eqnarray*} }
\newtheorem{definition}{Definition}[section] 
\newtheorem{theorem}[definition]{Theorem} 
\newtheorem{lemma}[definition]{Lemma} 
\newtheorem{proposition}[definition]{Proposition}
\theoremstyle{plain}
\title[Travel time difference]{Inverse problem of Travel time difference functions on compact Riemannian manifold with boundary}
\begin{document}
\author[M. V. de Hoop]{Maarten V.  de Hoop $^{\diamond}$}


\author[T. Saksala]{Teemu Saksala $^{\diamond,\: \ast}$}

\let\thefootnote\relax\footnote{ 
$^\diamond$  Department of Computational and Applied mathematics, Rice University, USA
\\ 
$^\ast$ \textbf{teemu.saksala@rice.edu}}

\begin{abstract}
We show that the travel time difference functions, measured on the boundary, determine a compact Riemannian manifold with smooth boundary up to Riemannian isometry, if boundary satisfies a certain visibility condition. This corresponds with the inverse microseismicity problem. 


The novelty of our paper is a new type of a proof and a weaker assumption for the boundary than it has been presented in the literature before. We also construct an explicit smooth atlas from the travel time difference functions. 
\end{abstract}

\maketitle

\section{Introduction}
\label{Se:motivation}
Let $(N,g)$ be a complete, connected smooth Riemannian manifold. We split the manifold into two parts that are a closed set $M$, with non-empty interior, and the closure of the exterior $F:=\overline{N \setminus M}$. We assume that the boundary $\p M$ of $M$ is a smooth co-dimension one manifold. The set $F$ is the known observation domain and $M$ is the object of interest, for instance Earth. The Riemannian metric $g$ can be seen as a proxy of the material parameters of $M$. 

For any $p,q \in N$ we denote by $d_N(p,q)$ the length of a distance minimizing geodesic of $(N,g)$ that connects $p$ to $q$. We assume that the wave speed in $F$ is much slower than in $M$. Especially if $\p M$ is strictly convex, we may assume that  distance minimizing geodesics of $(N,g)$ connecting $p$ to $q$ stay inside $M$, if $p,q\in M$. This implies
\begin{equation}
\label{eq:exterior_dist}
 d_M(p,q)=d_{N}(p,q),  \quad p,q \in M,
\end{equation}
where  $d_M(p,q)$ is the distance from $p$ to $q$ in $M$, that is given as the infimum of lengths of curves from $p$ to $q$ that stay in $M$. For a while we assume that \eqref{eq:exterior_dist} holds and we denote $d_M=d_g$.

Suppose that there exists a Dirac point source $(p,s)\in M \times \R$ of a Riemannian wave equation, with zero Cauchy data. It follows from \cite{duistermaat1972fourier} and \cite{greenleaf1993recovering} that the singularities emitted from $(p,s)$ propagate along the geodesics of $(N,g)$ (see for instance \cite{LaSa} for more details).
For every $z \in  \p M$ we define the \textit{arrival time} $\mathcal{T}_{p,s}(z)$ to be the infimum of times when a spherical wave emitted form $(p,s)$ is observed at $z$.  Hence $\mathcal T_{p,s}(z)=d_g(p,z)+s$, and the  \textit{travel time difference  function} satisfy an equation
\begin{equation}
D_p(z_1,z_2):=d_g(p,z_1)-d_g(p,z_2) =\mathcal T_{p,s}(z_1)-\mathcal T_{p,s}(z_2), \quad z_1,z_2 \in \p M.
\label{eq:Relation of wave data and DDD}
\end{equation}
The important property of this function is that it is given as the difference of the arrival times.
The knowledge of the emission time $s$ or the origin remains unknown, but the function $D_p$ can be determined without  knowledge on $s$.  
\color{black} 
This paper is devoted to the study of the inverse problem of travel time difference functions. This problem can be formulated as follows. Does the collection 
$$
\{D_p:p \in M^{int}\},
$$
determine the Riemannian manifold $(M,g)$ up to isometry?  


\bigskip
Now we give our problem setting. Let $(M,g)$ be a compact connected $n$--dimensional Riemannian manifold with smooth boundary $\p M$. 
Since $M$ is compact for any points $p,q \in M$ there exists a distance minimizing $C^1$--smooth curve $c$ from $p$ to $q$, see \cite{alexander1981geodesics}. Moreover for any $t_0 \in [0,d_g(p,q)]$ such that point $\gamma(t_0) $ is an interior point of $M$ there exists $\epsilon>0$ such that $c:(t_0-\epsilon, t_0 +\epsilon)$ is a geodesic. We denote the collection of all interior points of $M$ by $M^{int}$. 
We use the notation $SM$ for the unit sphere bundle of $(M,g)$. Therefore each $(p,v) \in SM$ determines the  unique maximal unit speed geodesic $\gamma_{p,v}$ of $(M,g)$. 

For any $p \in M$ we define the corresponding \textit{travel time difference function}.
\begin{equation}
\label{eq:DDF}
D_p:\p M \times \p M \to \R, \quad D_p(z_1,z_2):=d_g(p,z_1)-d_g(p,z_2).
\end{equation}
Notice that the function $D_p$ is continuous. We assume that the following \textit{travel time difference data} 
\begin{equation}
\label{eq:data}
(\p M, \: \{D_p: \: p \in M^{int}\}),
\end{equation}
is given. That is we assume, that the  $(n-1)$--dimensional smooth manifold  $\p M$ without boundary and the collection of functions $\{D_p:\p M \times \p M \to \R \: |\: p \in M^{int}\}$ are given. We  emphasize that a priori the points $p$ related to $D_p$ are unknown.

\medskip
The aim of this paper is to prove that travel time difference data determine $(M,g)$ up to isometry. Before stating our main theorem, we describe an additional geometric property for $\p M$ under which we can prove the uniqueness of the inverse problem. 

Let $(N,G)$ be any smooth closed Riemannian manifold that extends $(M,g)$, such that  $g=G|_{M}$.
We  use the notation 
\[
\ell(x,v):=\inf \{t > 0: \gamma_{x,v}(t) \in N \setminus M\}, \quad (x,v)\in SM.
\]
Thus the domain of definition for $\gamma_{x,v}$ is $[-\ell(x,-v),\ell(x,v)]$. Moreover by Lemma 1 of \cite{stefanov2009}, $\ell(x,v)$ is independent of the extension.  We note that $\gamma_{x,v}$ may intersect the boundary tangentially in many points.


\begin{definition}
\label{eq:SU-cond-2}
We say that $(M,g)$ satisfies \textit{the visibility} condition, if the following holds: For every $z \in \p M$ there exists $(z,\eta) \in \p S M, \hbox{ such that } \ell(z,\eta) < \infty.$  Geodesic $\gamma_{z,\eta}: [0,\ell(z,\eta)] \to M$ is a distance minimizer and $\gamma_{z,\eta}(\ell(z,\eta))$ is not a cut point to $z$, $\dot{\gamma}_{z,\eta}(\ell(z,\eta))$ is tranversal to $\p M$ and  $\gamma_{z,\eta}((0,\ell(z,\eta))) \subset M^{int}$. 
\end{definition}



\medskip
Next, we formulate our main Theorem. Let $(M_1,g_1)$ and $(M_2,g_2)$ be two smooth compact Riemannian manifolds with smooth boundaries $\p M_1$ and $\p M_2$. 
\begin{definition}
\label{de:TTDD_agree}
We say that the travel time difference data of $(M_1,g_1)$ and $(M_2,g_2)$ coincide, if there exists a diffeomorphism  $\phi:\p M_1 \to \p M_2$ such that
\begin{equation}
\label{eq:equivalent_data}
\{D_p(\phi^{-1}(\cdot),\phi^{-1}(\cdot)): p \in M_1^{int}\}=\{D_q: q \in M_2^{int}\}.
\end{equation}
\end{definition}

Then.
\begin{theorem}
\label{th:main}
Let $(M_i,g_i),\:  i=1,2$ be  compact, connected $n$--dimensional Riemannian manifolds with smooth boundaries $\p M_i$. Suppose that $(M_1,g_1)$ satisfy the visibility condition \ref{eq:SU-cond-2}. If  the travel time difference data of $(M_1,g_1)$ and $(M_2,g_2)$ coincide, then there exists a Riemannian isometry $\Psi:(M_1,g_1) \to (M_2,g_2)$ such that the restriction of $\Psi$ on $\p M_1$ coincides with $\phi$.
\end{theorem}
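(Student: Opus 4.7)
The plan is to define a candidate map $\Psi:M_1\to M_2$ extending $\phi$, upgrade it to a diffeomorphism via a constructive atlas built from the $D_p$'s, and then prove it is a Riemannian isometry. Concretely, I would set $\Psi|_{\p M_1}:=\phi$ and, for each $p\in M_1^{int}$, take $\Psi(p)$ to be any $q\in M_2^{int}$ with $D_q=D_p\circ(\phi^{-1}\times\phi^{-1})$, whose existence is guaranteed by \eqref{eq:equivalent_data}. Uniqueness of $q$ should follow from the injectivity of $q\mapsto D_q$ on $M_2^{int}$: if $D_{q_1}=D_{q_2}$ then $d_{g_2}(q_1,\cdot)-d_{g_2}(q_2,\cdot)$ is constant on $\p M_2$, and tangential differentiation at the (open dense) set of boundary points where both distance functions are $C^1$ forces the initial velocities of the minimizing geodesics to $q_1$ and $q_2$ to coincide; thus $q_1,q_2$ lie on a common geodesic issuing from $z$, and varying $z$ pins them together. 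A symmetric construction gives the inverse map, so $\Psi$ is a bijection of interiors.

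Continuity of $\Psi$ then follows from uniform convergence $D_{p_n}\to D_p$ together with the above injectivity, and the extension agrees with $\phi$ on $\p M_1$ because $D_p\to D_z$ as $p\to z\in\p M_1$. Taking $p\to z'$ in the matching identity $D_p(z,z')=D_{\Psi(p)}(\phi(z),\phi(z'))$ yields the useful byproduct $d_{g_1}(z,z')=d_{g_2}(\phi(z),\phi(z'))$ for all $z,z'\in\p M_1$, i.e.\ $\phi$ preserves the ambient boundary-to-boundary distances.

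For smoothness, given $p\in M_1^{int}$, I would invoke the visibility condition together with a general-position argument to find $z_0,\dots,z_n\in\p M_1$ whose associated initial velocities $\dot\gamma_{p,z_i}(0)\in S_pM_1$ of the minimizing geodesics from $p$ to $z_i$ make the map $F_p(\tilde p):=(D_{\tilde p}(z_i,z_0))_{i=1}^n$ a local diffeomorphism near $p$ onto an open subset of $\R^n$. The analogous map $\tilde F(\tilde q):=(D_{\tilde q}(\phi(z_i),\phi(z_0)))_{i=1}^n$ is a local chart near $q=\Psi(p)$ in $M_2$, and the data matching yields $F_p=\tilde F\circ \Psi$; hence $\Psi=\tilde F^{-1}\circ F_p$ is smooth in a neighborhood of $p$. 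Boundary points are treated with boundary normal coordinates together with the smooth diffeomorphism $\phi$.

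The final and most delicate step is isometry. From the data, the quantity $c(p):=d_{g_1}(p,z)-d_{g_2}(\Psi(p),\phi(z))$ is independent of $z\in\p M_1$ and satisfies $c(p)\to 0$ as $p\to\p M_1$. For $p$ on a visibility geodesic $\gamma_{z,\eta_z}:[0,\ell]\to M_1$ ending at $z'\in\p M_1$, the identity $d_{g_1}(p,z)+d_{g_1}(p,z')=\ell=d_{g_2}(\phi(z),\phi(z'))$ combined with the triangle inequality in $M_2$ gives $c(p)\le 0$. Conversely, picking $q$ on a minimizing geodesic in $M_2$ joining $\phi(z)$ to $\phi(z')$ (such a geodesic exists because boundary-to-boundary distances are preserved) and setting $p:=\Psi^{-1}(q)$, the triangle inequality in $M_1$ produces $c(p)\ge 0$. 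Promoting these two bounds by the smoothness of $\Psi$ should force $c\equiv 0$; thus $d_{g_2}(\Psi(p),\phi(z))=d_{g_1}(p,z)$ for all $p$ and $z$, the boundary distance functions coincide, and reading off the metric locally in the explicit atlas from the unit gradients $\nabla d_{g_1}(\cdot,z)$ yields $\Psi^{*}g_2=g_1$. The main obstacle I foresee is pinning down $c\equiv 0$ globally: since the visibility condition is assumed only on $M_1$, the two inequalities on $c$ live on a priori different subsets of $M_1$, and bridging them requires a careful density/smoothness argument rather than a bare symmetry.
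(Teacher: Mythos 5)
Your first two steps (the embedding $p \mapsto D_p$, the homeomorphism $\Psi$ defined by matching the functions, and the interior charts $\tilde p \mapsto (D_{\tilde p}(z_i,z_0))_{i=1}^n$) follow essentially the paper's route. Two genuine gaps remain. The smaller one: near $\p M_1$ you cannot simply ``use boundary normal coordinates together with $\phi$'', because the normal coordinate $t(q)=d_{g_1}(q,\p M_1)$ is \emph{not} determined by the data (only differences of distances are), and at this stage you do not know that $d_{g_1}(q,\p M_1)=d_{g_2}(\Psi(q),\p M_2)$ --- that equality is part of what is being proved. The paper's fix is to replace $t$ by the data-determined boundary defining function $E_{z_0}(q)=d_g(z_0,Z(q))-D_q(z_0,Z(q))$ of \eqref{eq:boundary_def_func}, whose invariance under $\Psi$ follows from \eqref{eq:f_p_agree}; without something of this kind the smoothness of $\Psi$ up to the boundary is not established.

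The serious gap is in the isometry step. Your two one-sided bounds $c\le 0$ and $c\ge 0$ are each valid only on a restricted set: the first on the union of boundary-to-boundary minimizing geodesics of $(M_1,g_1)$, the second on $\Psi^{-1}$ of the union of boundary-to-boundary minimizers of $(M_2,g_2)$. Neither set need be all of $M_1$, nor even dense: an interior point through which no boundary-to-boundary minimizing curve passes (e.g.\ a point inside a deep slow-velocity pocket) receives neither bound, and $c$ is not a priori constant, subharmonic, or subject to any propagation principle, so its vanishing on $\p M_1$ does not spread inward; ``smoothness of $\Psi$'' supplies no mechanism. You flag this obstacle yourself, but the proof cannot be completed along these lines without a substantial new idea. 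The paper sidesteps it entirely and never proves $d_{g_1}(p,z)=d_{g_2}(\Psi(p),\phi(z))$ directly: it (i) determines the full boundary jet of the metric from the local lens relation (Lemma \ref{Le:jet}, via \cite{stefanov2009}, which is where the visibility condition enters), (ii) extends to a closed manifold $N$ with metrics $G,\widetilde G$ agreeing outside $M$, (iii) shows the travel time difference functions of $(N,G)$ and $(N,\widetilde G)$ coincide via the decomposition \eqref{eq:proof_of_(31)}, (iv) recovers images of geodesics from gradients of $D_p(\cdot,z)$ and deduces that $G$ and $\widetilde G$ are geodesically equivalent, and (v) concludes $G=\widetilde G$ from the integral \eqref{Matveev formula} of Topalov--Matveev. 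That chain, rather than a direct reconstruction of the boundary distance functions, is what closes the argument.
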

\color{black}

While preparing this paper for submission, the authors became aware                 
that S. Ivanov very recently posted a preprint \cite{ivanov2018distance} on ArXiv with a result (Proposition 7.3.) related                
to the result presented here. Indeed, he proved a similar result for                
complete manifolds with boundary under the assumption that the                      
boundary is nowhere concave. On the other hand by the proof of Lemma \ref{Le:jet}, the  claim of  Proposition 7.3. in \cite{ivanov2018distance} holds if the nowhere concave boundary condition is replaced with the visibility condition.  

We give a different proof for Theorem \ref{th:main} (see Section \ref{Se:outline} for the outline of our proof) compared to one given in \cite{ivanov2018distance}. The proof given in \cite{ivanov2018distance} is based on distance comparison inequalities implied by Toponogov's theorem and minimizing geodesic extension property. The latter property provides a lower bound on the length of a minimizing extension
of a geodesic beyond a non-cut point in terms of the length of a minimizing
extension beyond the other endpoint.

\medskip
We end this section by comparing the visibility condition to the nowhere concave boundary condition. Recall that the boundary $\p M$ of Riemannian manifold $(M,g)$ is nowhere concave, if for every $z \in \p M$ the second fundamental form of $\p M$ at $z$, with respect to the inward-pointing normal vector, has at least one positive eigenvalue. If $\p M$ is nowhere concave then by the proof of Proposition 3.4. of \cite{zhou2012recovery} and Section 4.1. of \cite{sharafutdinov2012integral} it holds that $(M,g)$ satisfies the visibility condition. Notice that an annulus, contained in  Euclidean plane, satisfies the visibility condition, but not the nowhere concave boundary condition. Therefore the visibility condition is more general of these two.

Finally we will give an example of such geometry that does not satisfy either of these boundary conditions. Let $M\subset S^2$ be a spherical cap larger than the half--sphere. If $g$ is the round metric on $M$, then $(M,g)$ does not satisfy the visibility condition, since any $g$--distance minimizing curve between boundary points lies in $\p M$ and therefore it is not a geodesic of $S^2$. In this case $\p M$ is not either nowhere concave.
\subsection*{Background}

\subsubsection{Four geometric inverse problems related to the Riemannian wave equation}

In this section we assume that $N,\: M, \: F$ and $g$ are as in Section \ref{Se:motivation}. There are four different data sets that are all related to Riemannian wave equation with the Dirac point source $(p,s)\in M\times \R$ and zero Cauchy data. 

The inverse problem of travel time functions have been considered in \cite{Katchalov2001,kurylev1997multidimensional}. The authors study the properties of the map $\mathcal{R}:M \to C(\p M)$, in which a point $p \in M$ is mapped into the corresponding travel time function $r_p:\p M \to \R$, given by the formula
$$
r_p(z)=d_g(p,z), \quad z \in \p M.
$$ 
The authors show that the data $(\p M, \{r_p: p \in M\})$ determine a manifold $(M,g)$ up to isometry. They use the map $\mathcal R$ to construct an isometric copy of $M$ in $C(\p M)$. They don't pose any restrictions to the geometry.  

In \cite{LaSa} the authors prove a result related to Theorem \ref{th:main}. In this paper it is assumed that the travel time difference function is given in the \textit{observation} set $F$ with non-empty interior 
$$
D_p:F \times F \to \R.
$$
In addition they assume that the Riemannian structure of $(F,g)$ is known. The proof of the main theorem in \cite{LaSa} is very similar to the proof of Theorem \ref{th:main} presented in this paper and we will often refer to it for the details that are not presented in this paper.

In \cite{ivanov2018distance} S. Ivanov extends the result of \cite{LaSa} in the following set up. Let $M$ be any complete, connected Riemannian manifold without boundary. Let $F,U \subset M$ be open. If the topology and differential structure of the observation domain $F$ and $D_p, \: p \in U$ are given then these data determine the geometry of the domain $(U,g_U)$ uniquely up to a Riemannian isometry. The sets $U$ and $F$ can be faraway from each other, which is not the case in \cite{LaSa} where it is assumed that $U=M$.  Furthermore S. Ivanov proves that the determination of $(M,g)$ from travel time difference functions $D_p$ is stable, if the underlying manifold has a priori bounds on its diameter, curvature, and injectivity radius. In \cite{ivanov2018distance} also a similar result to our Theorem \ref{th:main} is provided for complete manifolds with nowhere concave boundary.

The inverse problem related to the set of exit directions 
\[
\Sigma_p=\{(\gamma_{p,v}(\ell(p,v)),\dot \gamma_{p,v}(\ell(p,v)))\in \p SM: v \in S_pM\}
\]
of geodesics emitted from $p$ has been studied in \cite{lassas2018reconstruction}. Let 
$$
I(g,w,z,l):= \hbox{ number of $g$--geodesics of lenght $l $ connecting $w$ to $z$}, \quad w,z \in N, \: l >0
$$ 
The authors show that, if $(N,g)$ is a closed manifold such that 
\begin{equation}
\label{eq:generic}
\sup_{w,z,\ell} I(g,w,z,l) <\infty,
\end{equation}
$M$ is non-trapping and $\p M$ is strictly convex, then the collection of exiting directions
$$
\{\Sigma_p \subset \p TM : p \in M^{int}\}
$$
determine the manifold $(M,g)$ up to isometry. Assumption \eqref{eq:generic} is needed to show that each set $\Sigma_p$ is produced by the unique $p\in M$. To our understanding, it is not known, if \eqref{eq:generic} follows from the convexity of the boundary and non-trapping properties. On the other hand in \cite{kupka2006focal} it is shown that \eqref{eq:generic} is a generic property in the space of all Riemannian metrics of $N$. 

The final data set is related to a \textit{generalized sphere} of radius $r>0$, that is given by formula
\[
S(p,r)=:\{\exp_p(v): v\in T_pM,\; \|v\|_g=r,\; \hbox{$\exp_p$ is not singular at $v$}\}.
\]
In \cite{deHoop1} the authors show that the spherical surface  data
$$
\{S(q,r)\cap F: q  \in M, \: r >0 \}
$$
determine the universal cover space of $N$. If a generalized sphere $S(p,r)$ is given the authors show that there exists a specific coordinate structure in a neighborhood of any maximal normal geodesic to $S(p,r)$ such that in these coordinates metric tensor $g$ can can be determined. However this does not determine $g$ globally. The authors provide an example of two different metric tensors which produce the same spherical surface data. 

\subsubsection{Microseismicity}
In this paper the results in \cite{LaSa} are adapted, in a fundamental way, to data available from actual seismic surveys. The point sources are microseismic events detected in dense arrays at Earth's surface. In our theorem we show that the data determine the metric up to change of coordinates. This implies that one can locate  the closest surface point and to determine the corresponding travel time to each event.  

For the following  we assume that $M \subset \R^m$ and $p \in M^{int}$. Recall that the arrival time function is  $\mathcal{T}_{p,s}(z)=d_g(p,z)+s$, where $z\in \p M$ is a receiver point and $s\in \R$ is the emission time. Since $\mathcal{T}_{p,s}(z)$ is a highly non-linear function of $p$ it is traditional in seismological literature to study the linearization of $ \mathcal{T}_{p,s}(z)$  \cite{waldhauser2000double}. Let $p_0 \in M^{int}$ be a master event i.e. an event for which $d_g(p_0,z)$ is known and $d_g(\cdot,z)$ is $C^1$--smooth near $p_0$. By the Taylor series of $d_g(\cdot,z)$ we have that the linearization
\[
r^z_p:=\nabla d_g(\cdot,z)\bigg|_{p_0}\cdot (p-p_0) \approx d_g(p,z)-d_g(p_0,z),
\]
where, $\nabla$ is the Euclidean gradient and $p$ is close to $p_0$. The \textit{double difference distance function}  is $r^z_p-r^z_q$. This function is the difference of differential distances between a (receiver) point $z$ at the boundary, and two source points $p,q$  in the interior -- in which the metric is unknown -- of a manifold.   The goal is to use this data to determine travel time $d_g(p,z)$ of the second event and to locate the relative distance $d_g(p,p_0)$ of the second event to the master event.

The event location with this method is known as the DD earthquake location algorithm presented in \cite{waldhauser2000double}. This method assumes a flat earth model and is appropriate for local scale
problems.  In contrast to seismological literature we  measure the difference of the arrival  times $\mathcal{T}_{p,s}(z),\mathcal{T}_{p,s}(w)$ of the given event $p\in M^{int}$ to two  receivers $z,w \in \p M$. For our theorem it is not necessary to linearize the arrival times. 

The travel time difference function, given in \eqref{eq:DDF}, is closer related to applications in exploration seismology with the purpose of locating microseismic events Grechka \textit{et al.} \cite{grechka2015relative}. In this paper the authors assume that the travel time to the receivers and location of the master event is known. Notice that our result do not recover the locations of the events in Cartesian coordinates.


In global seismology, the idea to decouple the earthquake doublets, that is two different events that are close to each other and produce nearly indentical waveform, to locate the events was introduced by Poupinet \textit{et                                               
al.} \cite{PoupinetEF-1984}. Zhang \& Thurber \cite{ZhangT-2006, ZhangT-2003} extend the              
double difference location method of Waldhauser \&                                  
Ellsworth \cite{waldhauser2000double} with an attempt to simultaneously solve for               
both velocity structure and seismic event locations. They develop  a
regional DD seismic tomography methods that deal effectively with
discontinuous velocity structures without knowing them a priori. Their methods also take Earths curvature into account.

\color{black}

\section{Proof of the Main theorem}
In this section we prove Theorem \ref{th:main}. Whenever it is not necessary to distinguish manifolds $M_1$ and $M_2$ from one other we drop the subindices. In these cases we work with the data \eqref{eq:data}.

\subsection{Outline of the proof of the Main theorem}
\label{Se:outline}

The proof consists of three steps. First we use the data \eqref{eq:data} to construct a mapping $\D$ from points of $M$ to continuous functions on $\p M \times \p M$. We show that this mapping is a topological embedding. Then we use the diffeomorphism $\phi:\p M_1 \to \p M_2$ and \eqref{eq:equivalent_data} to construct a homeomorphism $\Psi:M_1 \to M_2$ as in Theorem \ref{th:main} (see \eqref{eq:map_psi} for the definition). In second part we show that this mapping is a diffeomorphism. We prove the existence of such local coordinate maps that are determined by \eqref{eq:data}. In the third part we first prove that the data \eqref{eq:data} determine the images of geodesic segments that come to the boundary $\p M$. Finally we use this information to prove the uniqueness of Riemannian structure.

The outline of the proof of the main theorem is similar to the proof of the main theorem of \cite{LaSa}. The proof presented in this paper contains two key differences to the earlier result. The first one is the construction of the boundary coordinate system, in the beginning of Section \ref{Se:smooth}. The determination of the boundary defining function (see \eqref{eq:func_f_p} and \eqref{eq:boundary_def_func}), only from the data \eqref{eq:data}, has not been presented in the literature before. The second difference, that is considered in the beginning of Section \ref{Se:Riemannian}, is related to the construction of metric tensor from the data \eqref{eq:data}. 
In order to use the similar techniques as in \cite{LaSa}, to prove that the metrics $g_1$ and $\Psi^{\ast}g_2$ coincide, we need to prove that the data \eqref{eq:data} determine the full Taylor expansion of the metric tensor on $\p M$ in boundary normal coordinates. This makes it possible to extend $M_1$ to a closed manifold $N$ given with two smooth metric tensors $G$ and $\widetilde G$ that coincide in $F:=\overline{N\setminus M_1}$, $G|_{M_1}=g_1$ and $\widetilde G|_{M_1}=\Psi^{\ast}g_2$. Since we don't assume $\p M$ to be strictly convex, we will need also to show that the travel time difference functions $D_p:F\times F \to \R, \: p\in N$ of $(N,G)$ and  $(N,\widetilde G)$ coincide.  For this last step we use the proof of the Proposition 7.3 of \cite{ivanov2018distance} by S. Ivanov. The visibility condition of the Definition \ref{eq:SU-cond-2} is needed to tackle these problems. 

\color{black}
\subsection{Topology}
We start first extending the data to the boundary. If $p,w \in \p M$ then by the triangle inequality it holds that
\begin{equation}
\label{eq:boundary_distance}
d_g(p,w)=\sup_{q \in M^{int}}D_q(p,w).
\end{equation}
Thus data \eqref{eq:data} determine $d_g:\p M \times \p M \to \R$ and the extended data
\begin{equation}
\label{eq:data_full}
(\p M, \{D_p: \: p \in M\}).
\end{equation}
Our first Lemma is

\begin{lemma}
Let $(M_i,g_i),\:  i=1,2$ be compact $n$--dimensional Riemannian manifolds with smooth boundaries $\p M_i$. If  the travel time difference data of $(M_1,g_1)$ and $(M_2,g_2)$ coincide, then
\begin{equation}
\{D_p(\phi^{-1}(\cdot),\phi^{-1}(\cdot)): p \in M_1\}=\{D_q: q \in M_2\}. \label{eq:equivalent_full_data}
\end{equation}
\end{lemma}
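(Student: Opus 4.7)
The hypothesis \eqref{eq:equivalent_data} already gives the identity of sets over interior sources. The aim is to promote this to all of $M_1$ and $M_2$ by a continuity-plus-compactness argument based on the representation \eqref{eq:boundary_distance}, or more directly on the continuity of the distance function.

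\textbf{Step 1: Continuity of $p \mapsto D_p$.} I would first observe that the map
\[
M \ni p \;\longmapsto\; D_p \in C(\p M \times \p M)
\]
is continuous with respect to the sup norm on the target. Indeed, for $p, p' \in M$ and $(z_1,z_2)\in \p M \times \p M$, two applications of the triangle inequality give
\[
\bigl|D_p(z_1,z_2) - D_{p'}(z_1,z_2)\bigr| \;\le\; 2\,d_g(p,p'),
\]
so $p \mapsto D_p$ is in fact $2$-Lipschitz. In particular, if $p_n \to p$ in $M$, then $D_{p_n} \to D_p$ uniformly on $\p M \times \p M$.

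\textbf{Step 2: Boundary sources from interior approximation.} Fix $p \in \p M_1$. Since $M_1^{int}$ is dense in $M_1$, pick $p_n \in M_1^{int}$ with $p_n \to p$. By the hypothesis \eqref{eq:equivalent_data}, for every $n$ there exists $q_n \in M_2^{int}$ such that
\[
D_{p_n}(\phi^{-1}(\cdot),\phi^{-1}(\cdot)) = D_{q_n}(\cdot,\cdot) \quad \text{on } \p M_2 \times \p M_2.
\]
Because $M_2$ is compact, a subsequence $q_{n_k}$ converges to some $q \in M_2$. Applying Step 1 on both sides (the left-hand side via $p_{n_k} \to p$ in $M_1$, the right-hand side via $q_{n_k} \to q$ in $M_2$, and noting that $\phi$ is a fixed diffeomorphism so it preserves uniform convergence), I pass to the limit to obtain
\[
D_p(\phi^{-1}(\cdot),\phi^{-1}(\cdot)) = D_q(\cdot,\cdot),
\]
which shows the left-to-right inclusion in \eqref{eq:equivalent_full_data}.

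\textbf{Step 3: Symmetric inclusion.} The reverse inclusion is obtained by repeating Steps 1--2 with the roles of $M_1$ and $M_2$ swapped, using $\phi^{-1}:\p M_2 \to \p M_1$ instead of $\phi$.

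\textbf{Expected main issue.} There is no serious obstacle; the only subtle point is that the correspondence $p_n \mapsto q_n$ supplied by \eqref{eq:equivalent_data} is not a priori a continuous map (and need not be unique), so one cannot simply ``take the limit'' on the $q$-side without invoking compactness of $M_2$ to extract a convergent subsequence. Once that is done, continuity of $q \mapsto D_q$, which holds uniformly up to the boundary, closes the argument without any use of the visibility condition. Note that \eqref{eq:boundary_distance} is not logically required for this lemma, but it provides a conceptually clean explanation for why boundary sources are already implicit in the interior data.
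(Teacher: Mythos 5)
Your proof is correct, but it takes a genuinely different route from the paper's. The paper's argument is a one-line computation: it applies the identity \eqref{eq:boundary_distance}, $d_g(p,w)=\sup_{q\in M^{int}}D_q(p,w)$, to the hypothesis \eqref{eq:equivalent_data} (taking the pointwise supremum over the two equal \emph{sets} of functions) to deduce the boundary distance identity \eqref{eq:boundary_dist_agree}, $d_1(\phi^{-1}(z),\phi^{-1}(w))=d_2(z,w)$; from this it follows at once that for $p\in\p M_1$ one has $D_p(\phi^{-1}(\cdot),\phi^{-1}(\cdot))=D_{\phi(p)}$, so the boundary sources correspond \emph{explicitly via $\phi$} and \eqref{eq:equivalent_full_data} follows. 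Your argument instead is a soft continuity-plus-compactness limit: the $2$-Lipschitz bound on $p\mapsto D_p$ (which the paper also uses, but only later in Lemma \ref{pr:topology}), density of the interior, and extraction of a convergent subsequence of the $q_n$. Both are complete proofs of the stated lemma, and your handling of the non-uniqueness/non-continuity of the correspondence $p_n\mapsto q_n$ is exactly right. The trade-off is that your version only produces \emph{some} $q\in M_2$ with $D_p(\phi^{-1}(\cdot),\phi^{-1}(\cdot))=D_q$, without identifying $q=\phi(p)$, whereas the paper's computation additionally delivers \eqref{eq:boundary_dist_agree} and the boundary correspondence $p\mapsto\phi(p)$ as byproducts --- both of which are reused downstream (e.g.\ in Proposition \ref{th:topology} to show $\Psi|_{\p M_1}=\phi$, and in Lemma \ref{Le:boundary_coord}). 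So if you adopt your proof, you would still need to establish \eqref{eq:boundary_dist_agree} separately before those later steps.
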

\begin{proof}
From \eqref{eq:equivalent_data} and \eqref{eq:boundary_distance} it follows that 
\begin{equation}
\label{eq:boundary_dist_agree}
d_1(\phi^{-1}(p),\phi^{-1}(q))=d_2(p,q), \quad p,q \in \p M_2.
\end{equation}
Here, $d_i$ is the distance function of $g_i$ for $i \in \{1,2\}$. Therefore \eqref{eq:equivalent_full_data} holds.
\end{proof}
We study the properties of the mapping 
$$
\D:M \to C(\p M \times \p M), \quad \D(p)=D_p,
$$
where the target space is equipped with the $L^\infty$--norm.

\begin{lemma}
\label{pr:topology}
The mapping $\D$ is a topological embedding.
\end{lemma}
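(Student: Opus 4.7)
My plan splits the argument into continuity, a soft compactness step, and the injectivity analysis, which is the main content. For continuity, the triangle inequality gives $|D_p(z_1,z_2) - D_q(z_1,z_2)| \le 2 d_g(p,q)$, so $\|D_p - D_q\|_\infty \le 2 d_g(p,q)$ and $\D$ is Lipschitz. Since $M$ is compact and $C(\p M \times \p M)$ is Hausdorff, any continuous injection from $M$ is automatically a topological embedding: the co-restriction to its image is a continuous bijection from a compact space to a Hausdorff space, hence a homeomorphism. It therefore suffices to prove that $\D$ is injective.

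For injectivity, suppose $D_p = D_q$, so that $d_g(p, \cdot) - d_g(q, \cdot) \equiv C$ on $\p M$. In the main case $p, q \in M^{int}$, I would differentiate this identity tangentially along $\p M$: at any $z \in \p M$ where both distance functions are smooth --- a full-measure subset of $\p M$, obtained by removing the boundary cut loci of $p$ and $q$ --- the tangential gradients coincide. Since $\nabla_z d_g(p, z)$ equals the unit velocity at $z$ of the unique minimizing geodesic from $p$ arriving at $z$ (and similarly for $q$), equality of the tangential components together with unit length and the common inward-from-$M^{int}$ sign of the normal components forces the two full velocity vectors to agree. The two minimizing geodesics therefore extend backward from $z$ as a single geodesic, so both $p$ and $q$ lie on a common geodesic issuing into $M$ from $z$.

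Assume for contradiction $p \neq q$. The previous paragraph provides, for every $z$ in a full-measure subset of $\p M$, a geodesic passing through the two fixed points $p, q$ and through $z$. The set of geodesics through two distinct fixed points of $M$ is at most countable (it is in bijection, up to reparametrisation, with $\exp_p^{-1}(q)$, which is locally finite away from the conjugate locus), and each such geodesic --- being a smooth curve in $M$ and not contained in $\p M$ since it passes through $M^{int}$ --- meets $\p M$ in at most countably many points. Hence only countably many $z \in \p M$ are admissible, contradicting the uncountability of the full-measure subset above. Therefore $p = q$.

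The boundary case $p \in \p M$ (and the symmetric one) is handled separately: then $z^* = p$ realises the zero minimum of $d_g(p, \cdot)$ on $\p M$, so $p$ is closest to $q$ as well, and constancy forces $d_g(q, z) = d_g(q, p) + d_g(p, z)$ on $\p M$. Thus $p$ lies on a $C^1$ distance-minimizer (in the sense of \cite{alexander1981geodesics}) from $q$ to every boundary point $z$; but $C^1$ regularity at $p$ then requires the outgoing tangent at $p$ toward $z$ to coincide with the outward unit normal, which cannot continue into $M$, forcing $p = q$. I expect the main technical obstacle to be the countability step when $q$ happens to lie in the conjugate locus of $p$, where $\exp_p^{-1}(q)$ may fail to be discrete. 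The cleanest workaround is to restrict $z$ to a neighbourhood of a common closest boundary point $z^*$ --- recovered as a joint minimizer of $d_g(p, \cdot)$ and $d_g(q, \cdot)$, along which both minimizing geodesics are unique, orthogonal to $\p M$ and hence lie on a common inward normal ray --- and to perform the matching argument locally in boundary-normal coordinates, in the spirit of the analogous step in \cite{LaSa}.
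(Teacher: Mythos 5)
Your continuity step ($\D$ is $2$--Lipschitz) and your compactness step (a continuous injection from a compact space into a Hausdorff space is an embedding) are correct and are exactly the paper's. The problem is the injectivity argument, which is where all the content lies, and your primary route for it has two genuine gaps. First, on a manifold with boundary a distance minimizer from $z\in\p M$ to an interior point is in general only a $C^1$ curve that is geodesic on the open subintervals where it lies in $M^{int}$; it may contain arcs running along $\p M$ (the paper recalls this, citing \cite{alexander1981geodesics}). Consequently, matching the arrival velocities of the minimizers from $p$ and from $q$ at $z$ does not place $p$ and $q$ on a single geodesic issuing from $z$: the two reversed minimizers are only guaranteed to coincide up to the first boundary contact, after which ODE uniqueness no longer applies. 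Second, even granting genuine geodesics, the countability step fails twice over: $\exp_p^{-1}(q)$ can be uncountable when $q$ is conjugate to $p$ (you flag this yourself), and a geodesic can meet $\p M$ in a closed, nowhere dense but uncountable (Cantor-type) set of parameter values, so ``each such geodesic meets $\p M$ in at most countably many points'' is unjustified. The cardinality contradiction therefore does not close.

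The paper's proof avoids both difficulties by exploiting the closest boundary point rather than generic boundary points. From $D_x=D_y$ one reads off, via the functions $f_{x,w}(z)=D_x(z,w)$, that $x$ and $y$ have the same set of closest boundary points. Fixing one such $z_0$, the minimizers realizing $d_g(x,\p M)$ and $d_g(y,\p M)$ are genuine geodesics normal to $\p M$ at $z_0$, so $x=\gamma_{z_0,\nu}(s_x)$, $y=\gamma_{z_0,\nu}(s_y)$ and $d_g(x,y)=s_y-s_x$; then for a nearby boundary point $z\neq z_0$ the identity $D_x(z,z_0)=D_y(z,z_0)$ forces the concatenation of $\gamma_{z_0,\nu}|_{[s_x,s_y]}$ with a minimizer from $x$ to $z$ to be a minimizer from $y$ to $z$, hence $C^1$, which is impossible at the corner at $x$ unless $x=y$. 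Your closing ``workaround'' (restrict to a neighbourhood of a common closest boundary point, where both minimizers lie on the common normal ray) is pointing at precisely this argument, but as written it is deferred rather than carried out, and it is the whole proof; the gradient-plus-countability route it is meant to rescue should simply be discarded.
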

\begin{proof}
Using triangle inequality it is easy to see that $\D$ is $2$--Lipschitz.

\medskip
Next we prove that $\mathcal{D}$ is one-to-one. To show this, assume that $x,y \in M$ are such that  $D_x=D_y$. We first show that this implies that  the set $\{z_x\}$ of closest boundary points of $x$ coincides with the set $\{z_y\}$ of closest boundary points of $y$.  Let $w \in \p M$ and define
\begin{equation}
\label{eq:func_f_w}
f_{x,w}:\p M \to \R, \quad f_{x,w}(z):=D_x(z,w).
\end{equation}
Then $\{z_x\}$ is the set of minimizers of function $f_{x,w}$. Since $f_{x,w}=f_{y,w}$, we have proven that $\{z_x\}=\{z_y\}$. We also use the function $f_{x,w}$ later when we construct a boundary defining function.

Let $z_0 \in \{z_p\}$ and denote  $s_x=d_g(x,z_0)$ and $s_y=d_g(y,z_0)$. Without loss of generality, we can assume that  $s_x \leq s_y$. Let $\nu$ be the inward pointing unit normal vector field to $\p M$. Then $\gamma_{z_0,\nu}$ is the distance minimizing geodesic from $\p M$ to $x$ and $y$. Moreover 
\begin{equation}
\label{eq:dist_from_x_to_y}
x=\gamma_{z_0,\nu}(s_x), \:  y=\gamma_{z_0,\nu}(s_y) \hbox{ and } d(x,y)=s_y-s_x. 
\end{equation}
If $z \in \p M\setminus \{z_0\}$  is close to $z_0$, the distance minimizing geodesic $\gamma_x$ from $z$ to $x$ is not the same geodesic as $\gamma_{z_0,\nu}$, that is, the angle  $\beta$ of the curves $\gamma_x$ and $\gamma_{z_0,\nu}$ at the point $x$ is strictly between $0$ and $\pi$. Let $\gamma_y$ be a distance minimizing geodesic from $y$ to $z$. We note that $D_x(z,z_0)=D_y(z,z_0)$ and \eqref{eq:dist_from_x_to_y} yields
$$
\mathcal{L}(\gamma_y)=d(y,z)=d(y,x)+d(x,z)=\mathcal{L}(\gamma_{z_x,\nu}|_{[s_x,s_y]})+\mathcal{L}(\gamma_x).
$$
Thus the union $\mu$ of the curves $\gamma_{z_x,\nu}([s_x,s_y])$ and $ \gamma_x$ is a distance minimising curve from $z$ to $y$, and hence it is a geodesic. However, as the angle $\beta$, defined above, is strictly between $0$ and $\pi$, the curve  $\mu$ is not smooth at $x$, and hence it is not possible that $\mu$ is a geodesic unless $x=y$. Thus $x$ and $y$  have to be equal.

\medskip Since $M$ is compact and we just proved that $\D$ is continuous and one--to--one, we have that mapping $\D$ is closed. Thus the claim is proven.
\end{proof}
Since the mapping $\phi$, given by Definition \ref{de:TTDD_agree}, is a diffeomorphism the mapping
$$
\Phi:C(\p M_1 \times \p M_1) \to C(\p M_2 \times \p M_2), \quad \Phi(F)=F(\phi^{-1}(\cdot),\phi^{-1}(\cdot))
$$
is an isometry. Let $\D_i, \: i\in \{1,2\}$ be as $\D$ on $(M_i,g_i)$. Now we are ready to define the mapping
\begin{equation}
\label{eq:map_psi}
\Psi:M_1 \to M_2, \quad \Psi = \D_2^{-1} \circ \Phi\circ \D_1.
\end{equation}
\begin{proposition}
\label{th:topology}
Let $(M_i,g_i),\:  i=1,2$ be compact $n$--dimensional Riemannian manifolds with smooth boundaries $\p M_i$. If  the travel time difference data of $(M_1,g_1)$ and $(M_2,g_2)$ coincide, then the mapping $\Psi$ given by \eqref{eq:map_psi} is a homeomorphism such that the restriction of $\Psi$ on $\p M_1$ coincides with $\phi$. 
\end{proposition}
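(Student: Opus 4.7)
The plan is to assemble the homeomorphism $\Psi$ by combining three maps whose topological behaviour has already been established, and then to verify the boundary identity by a short direct computation with the definition of $D_p$.

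First I would check that $\Psi$ is well-defined and a homeomorphism. By Lemma \ref{pr:topology} each $\D_i$ is a topological embedding, hence a homeomorphism from $M_i$ onto its image $\D_i(M_i)\subset C(\p M_i\times\p M_i)$ endowed with the subspace topology. The map $\Phi$ is an isometry of the ambient function spaces (composition with a homeomorphism on each argument), so it is in particular a homeomorphism. The key observation is that equation \eqref{eq:equivalent_full_data} says precisely
\[
\Phi(\D_1(M_1))=\D_2(M_2),
\]
so the restriction $\Phi|_{\D_1(M_1)}:\D_1(M_1)\to\D_2(M_2)$ is a homeomorphism onto the image of $\D_2$. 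Consequently $\D_2^{-1}\circ\Phi\circ\D_1$ is well defined on all of $M_1$ and, being a composition of homeomorphisms, is itself a homeomorphism $M_1\to M_2$.

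Next I would verify the boundary identity $\Psi|_{\p M_1}=\phi$. Fix $z\in\p M_1$ and $w_1,w_2\in\p M_2$. By definition
\[
\Phi(\D_1(z))(w_1,w_2)=D_z(\phi^{-1}(w_1),\phi^{-1}(w_2))=d_1(z,\phi^{-1}(w_1))-d_1(z,\phi^{-1}(w_2)).
\]
By \eqref{eq:boundary_dist_agree}, which was already derived from the coincidence of the data, the boundary distance $d_1(z,\phi^{-1}(w_i))$ equals $d_2(\phi(z),w_i)$. Substituting gives $\Phi(\D_1(z))(w_1,w_2)=D_{\phi(z)}(w_1,w_2)=\D_2(\phi(z))(w_1,w_2)$. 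Applying the (now established) inverse $\D_2^{-1}$ yields $\Psi(z)=\phi(z)$ for every $z\in\p M_1$, as desired.

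There is no real obstacle here: all of the work has already been done in Lemma \ref{pr:topology} (injectivity, continuity, closedness of $\D$) and in the short lemma upgrading the boundary data \eqref{eq:boundary_dist_agree}. The only mild subtlety is making sure one correctly invokes the equality of image sets \eqref{eq:equivalent_full_data} rather than a pointwise correspondence—the correspondence $p\mapsto\Psi(p)$ is produced a posteriori by composing the inverse of $\D_2$ with $\Phi\circ\D_1$, and its identification with $\phi$ on the boundary then falls out from the computation above.
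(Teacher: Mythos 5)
Your proposal is correct and follows essentially the same route as the paper: well-definedness of $\Psi$ from Lemma \ref{pr:topology} together with the set equality \eqref{eq:equivalent_full_data}, and the boundary identity $\Psi|_{\p M_1}=\phi$ by the same direct computation with \eqref{eq:boundary_dist_agree}. The extra detail you supply on why $\Phi(\D_1(M_1))=\D_2(M_2)$ makes the composition well defined is a welcome clarification but not a different argument.
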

\begin{proof}
By  \eqref{eq:equivalent_full_data} and the Proposition \ref{pr:topology} it holds that the map $\Psi$ is a well-defined homeomorphism. If $p \in \p M_1$, then by \eqref{eq:boundary_dist_agree} for any $z, w \in \p M_2$ we have
$$
(\D_2(\phi(p))(z,w)=d_2(\phi(p),z)-d_2(\phi(p),w)=d_1(p,\phi^{-1}(z))-d_1(p,\phi^{-1}(w))=((\Phi \circ \mathcal{D}_1)(p))(z,w).
$$
Applying $\D^{-1}_2$ for both sides of the equation above we have $\Psi(p)=\phi(p)$.
\end{proof}
\subsection{Smooth structure}
\label{Se:smooth}
In this part we show that the mapping $\Psi$ given in \eqref{eq:map_psi} is 
\\
a diffeomorphism. We consider separately the boundary and the interior cases. 

\medskip We start with the boundary case. Let $\sigma_{\p M}$ be the collection of all boundary cut points, 
$$
\sigma_{\p M}:=\{\gamma_{z,\nu}(\tau_{\p M}(z)) \in M: \: z \in M\}, \quad \tau_{\p M}(z):=\sup\{t>0:d_g(\p M, \gamma_{z,\nu}(t))=t\}.
$$
By Section III.4. of \cite{sakai1996riemannian} it holds that
\begin{equation}
\label{eq:boundary_cut_locus}
\sigma_{\p M}=\overline{\{p\in M: \#\{z\in\p M: d_g(p,z)=d_g(p,\p M)\}\geq 2\}}.
\end{equation}
Choose $w \in \p M$. Then by \eqref{eq:boundary_cut_locus} and the Proposition \ref{pr:topology} the data \eqref{eq:data_full} determine the set 
\begin{equation}
\label{eq:complement_of_boundary_cut_locus}
M \setminus \sigma_{\p M}=\{p \in M : \hbox{ The map $f_{p,w}$ has precicely one minimizer.}\}^{int},
\end{equation}
where $f_{p,w}$ is as in \eqref{eq:func_f_w}.
\begin{lemma}
\label{Le:equivalence_of_cut_loci}
Let $(M_i,g_i),\:  i=1,2$ be compact $n$--dimensional Riemannian manifolds with smooth boundaries $\p M_i$. If  the travel time difference data of $(M_1,g_1)$ and $(M_2,g_2)$ coincide, then
$$
M_2 \setminus \sigma_{\p M_2}=\Psi(M_1 \setminus \sigma_{\p M_1}).
$$
\end{lemma}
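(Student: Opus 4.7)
The plan is to transport the characterization \eqref{eq:complement_of_boundary_cut_locus} of $M\setminus\sigma_{\p M}$ through the homeomorphism $\Psi$ by showing that the "unique minimizer" property of the function $f_{p,w}$ is preserved by $\Psi$ in a compatible way with $\phi$.

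First I would fix an arbitrary $w\in \p M_1$ and set $\widetilde w:=\phi(w)\in \p M_2$. The key computation is to rewrite $f_{\Psi(p),\widetilde w}$ in terms of $f_{p,w}$. By the definition \eqref{eq:map_psi} of $\Psi$, we have $\D_2(\Psi(p))=\Phi(\D_1(p))$, which means
\[
D_{\Psi(p)}(z_1,z_2)=D_p(\phi^{-1}(z_1),\phi^{-1}(z_2)), \qquad z_1,z_2\in \p M_2.
\]
Setting $z_2=\widetilde w$ and using $\phi^{-1}(\widetilde w)=w$ gives
\[
f_{\Psi(p),\widetilde w}(z)=D_{\Psi(p)}(z,\widetilde w)=D_p(\phi^{-1}(z),w)=f_{p,w}(\phi^{-1}(z)),
\]
so $f_{\Psi(p),\widetilde w}=f_{p,w}\circ \phi^{-1}$. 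Since $\phi:\p M_1\to \p M_2$ is a bijection, $f_{p,w}$ has precisely one minimizer on $\p M_1$ if and only if $f_{\Psi(p),\widetilde w}$ has precisely one minimizer on $\p M_2$.

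Next I would introduce the sets
\[
A_w:=\{p\in M_1: f_{p,w}\text{ has precisely one minimizer}\}, \quad A_{\widetilde w}:=\{q\in M_2: f_{q,\widetilde w}\text{ has precisely one minimizer}\},
\]
and note, from the previous step together with the fact that $\Psi$ is a bijection (Proposition \ref{th:topology}), that $\Psi(A_w)=A_{\widetilde w}$. Because $\Psi$ is a homeomorphism by Proposition \ref{th:topology}, it preserves topological interiors, hence $\Psi(A_w^{int})=\Psi(A_w)^{int}=A_{\widetilde w}^{int}$. Finally, I would invoke the characterization \eqref{eq:complement_of_boundary_cut_locus} on both manifolds, which yields $M_1\setminus \sigma_{\p M_1}=A_w^{int}$ and $M_2\setminus \sigma_{\p M_2}=A_{\widetilde w}^{int}$, so
\[
\Psi(M_1\setminus \sigma_{\p M_1})=\Psi(A_w^{int})=A_{\widetilde w}^{int}=M_2\setminus \sigma_{\p M_2},
\]
which is the claim.

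There is no serious obstacle: the whole argument is an intertwining diagram, and the only thing that needs to be checked carefully is the identity $f_{\Psi(p),\widetilde w}=f_{p,w}\circ \phi^{-1}$, which follows directly from unwinding the definition of $\Psi$ through $\Phi$ and $\D_i$. The mildest subtlety is that the characterization \eqref{eq:complement_of_boundary_cut_locus} involves a topological interior, so one must use that $\Psi$ is a homeomorphism (not merely a continuous bijection), but this is already established in Proposition \ref{th:topology}.
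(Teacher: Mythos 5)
Your proposal is correct and follows essentially the same route as the paper: the paper's proof consists precisely of the intertwining identity $f^1_{p,w}(z)=f^2_{\Psi(p),\phi(w)}(\phi(z))$ followed by an appeal to the characterization \eqref{eq:complement_of_boundary_cut_locus}. You merely make explicit the step that a homeomorphism preserves topological interiors, which the paper leaves implicit.
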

\begin{proof}
By the definition of the mapping $\Psi$ we have for any $p \in M_1$ and $w \in \p M_1$ that
$$
f^1_{p,w}(z)=f^2_{\Psi(p),\phi(w)}(\phi(z)), \quad z \in \p M_1,
$$
where $f^1_{p,w}$ and $f^2_{\Psi(p),\phi(w)}$ are defined as $f_{p,w}$ in  \eqref{eq:func_f_w}. Therefore the claim follows from \eqref{eq:complement_of_boundary_cut_locus}.
\end{proof}

Next we construct a boundary defining function on $M \setminus \sigma_{\p M}$. Let $p \in M \setminus \sigma_{\p M}$ and denote by $Z(p)$ the closest boundary point of $p$. The map $x \mapsto Z(x) \in \p M$ is smooth on $M \setminus \sigma_{\p M}$. Define a function
\begin{equation}
\label{eq:func_f_p}
f_p(z):=d_g(z,Z(p))-D_p(z,Z(p)), \quad z \in \p M.
\end{equation}
Notice that this function is determined by the data \eqref{eq:data_full}, and by triangular in equality the function $f_p$ is non-negative. If $p \in \p M$ then $f_p$ is a zero function. If $p \in M^{int} \setminus \sigma_{\p M}$ then  
\begin{equation}
\label{eq:func_f_p_outside_closes_bp}
f_p(z)>0, \quad z \in (\p M \setminus Z(p)).
\end{equation}
If this is not true then there exists $\p M \ni z \neq Z(p)$ such that
$$
d_g(p,z)=d_g(Z(p),z)+d_g(p,Z(p)).
$$ 
Which implies that there exists a distance minimizing curve from $p$ to $z$, that goes through $Z(p)$, but is not $C^1$ at $Z(p)$. By \cite{alexander1981geodesics} this is not possible. Thus \eqref{eq:func_f_p_outside_closes_bp} holds. Therefore we have proven the following
\begin{equation}
\label{eq:char_of_boundary}
\p M=\{p \in M \setminus \sigma_{\p M}: f_p \equiv 0\}.
\end{equation}

\begin{figure}
\begin{picture}(200,160)
\label{Fi:f_p}
  \put(-20,0){\includegraphics[height=12cm]{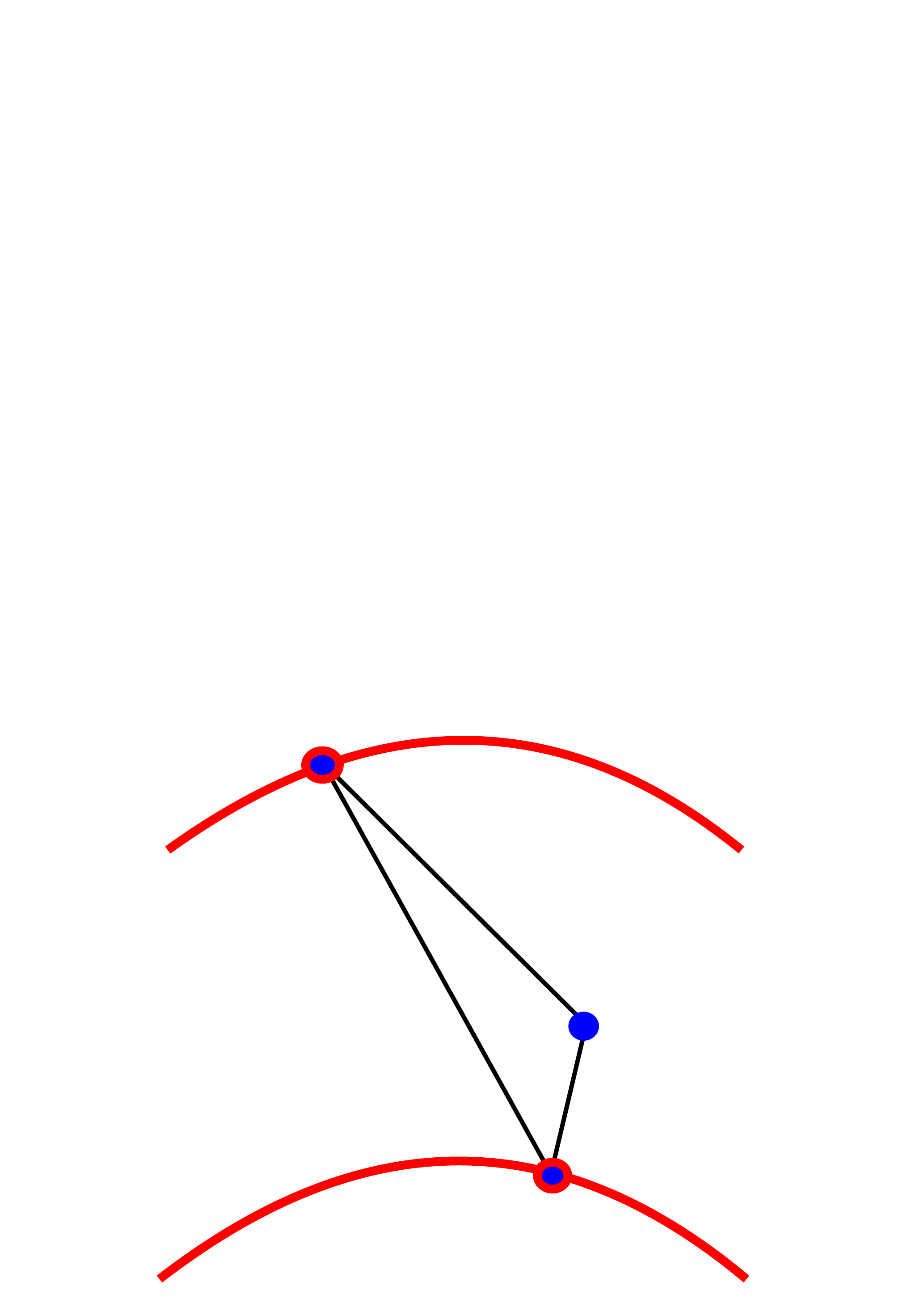}}
  \put(0,5){$\p M$}
   \put(115,15){$Z(p)$}
   \put(140,75){$p$}
    \put(0,115){$\p M$}
    \put(55,150){$z$}
   \end{picture}
  \caption{Here is the schematic picture of the  function $f_p$.}
\end{figure}
\begin{lemma}
\label{Le:smoothnes_of_dist_func}
Let $(M,g)$ be a smooth Riemannian manifold with smooth boundary for which the visibility condition \ref{eq:SU-cond-2} holds. Let $p \in \p M$. Then there exist $q \in \p M$ and neighborhoods $U,V \subset M$ of $p$ and $q$ respectively such that $d_g:U\times V$ is smooth. The distance minimizing geodesic from $p$ to $q$ is transversal to $\p M$ at $p$ and $q$. Moreover any distance minimizing geodesic $\gamma$ from $U$ to $V$ is contained $M^{int}$, if the start and end points are excluded.
\end{lemma}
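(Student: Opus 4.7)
The plan is to exploit the visibility condition at $p$ to obtain a good candidate endpoint $q$, and then to transport smoothness of the distance function from an ambient closed extension $(N,G)$ of $(M,g)$ down to $M$ itself.

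First, I apply Definition \ref{eq:SU-cond-2} with $z=p$ to obtain $\eta\in S_pM$ such that $\ell:=\ell(p,\eta)<\infty$, the segment $\gamma_{p,\eta}|_{[0,\ell]}$ is distance minimizing, $q:=\gamma_{p,\eta}(\ell)$ is not a cut point to $p$, $\dot\gamma_{p,\eta}(\ell)$ is transversal to $\p M$, and $\gamma_{p,\eta}((0,\ell))\subset M^{int}$. Since $p\in\p M$ and the curve immediately enters the open set $M^{int}$, the initial velocity $\eta$ is also transversal to $\p M$. I then extend $(M,g)$ to a closed Riemannian manifold $(N,G)$ with $G|_M=g$; the non-cut property is independent of the extension by Lemma 1 of \cite{stefanov2009}, so $q$ is still not a cut point to $p$ along $\gamma_{p,\eta}$ when the geodesic is viewed in $N$.

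Second, because $N$ has no boundary and $\gamma_{p,\eta}$ is a $G$-minimizing geodesic from $p$ to the non-cut point $q$, standard Riemannian geometry (no conjugate points up to time $\ell$, uniqueness of the minimizer) supplies open neighborhoods $\widetilde U,\widetilde V\subset N$ of $p$ and $q$ such that $d_G\in C^\infty(\widetilde U\times\widetilde V)$, each pair $(x,y)\in \widetilde U\times\widetilde V$ is joined by a unique $G$-minimizing geodesic $\gamma^G_{x,y}$, and the map $(x,y)\mapsto\gamma^G_{x,y}$ is continuous from $\widetilde U\times\widetilde V$ into the $C^1$ topology on curves.

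Third, I shrink $\widetilde U,\widetilde V$ so that every such minimizer stays in $M$. Since the image of $\gamma_{p,\eta}$ lies in $M$ with its interior sitting in the open set $M^{int}$, and $\gamma^G_{x,y}$ is $C^1$-close to $\gamma_{p,\eta}$, the middle portion of $\gamma^G_{x,y}$ remains in $M^{int}$ once $(x,y)$ is close enough to $(p,q)$. Near the two endpoints, transversality of $\dot\gamma_{p,\eta}(0)$ and $\dot\gamma_{p,\eta}(\ell)$ to $\p M$, combined with this $C^1$ proximity and the hypothesis $x,y\in M$, rules out any further crossings of $\p M$. Setting $U:=\widetilde U\cap M$ and $V:=\widetilde V\cap M$, I conclude $\gamma^G_{x,y}\subset M$ for every $(x,y)\in U\times V$, and therefore
\[
d_g(x,y)\leq \mathcal{L}(\gamma^G_{x,y})=d_G(x,y)\leq d_g(x,y),
\]
so $d_g=d_G$ is smooth on $U\times V$. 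The transversality of the minimizer from $p$ to $q$ is exactly what the visibility condition supplies, and the last claim, that every $d_g$-minimizer from $U$ to $V$ has its interior in $M^{int}$, follows because such a minimizer must coincide with $\gamma^G_{x,y}$.

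The hard part is the third step: I must rule out that a perturbed $G$-minimizer exits $M$ through $\p M$ at an interior time. The strict inclusion $\gamma_{p,\eta}((0,\ell))\subset M^{int}$ from the visibility condition handles the middle of the curve, while transversality of $\dot\gamma_{p,\eta}$ to $\p M$ at both endpoints controls the behaviour near times $0$ and $\ell$; together these two properties are exactly what make the stability argument go through.
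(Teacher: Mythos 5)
There is a genuine gap at the very first step, and it is exactly the point the visibility condition does \emph{not} hand you for free. You assert that because $\gamma_{p,\eta}((0,\ell))\subset M^{int}$ and $p\in\p M$, the initial velocity $\eta$ must be transversal to $\p M$. That inference is false: Definition \ref{eq:SU-cond-2} only demands transversality of the \emph{exit} direction $\dot\gamma_{p,\eta}(\ell)$, and a direction tangent to $\p M$ at $p$ can still send the geodesic immediately into $M^{int}$ wherever the boundary is concave. The planar annulus (which the paper explicitly cites as satisfying the visibility condition) is the counterexample: at a point $p$ of the inner circle, the direction tangent to that circle produces a straight segment that instantly enters the interior, is minimizing, exits the outer circle transversally, and ends at a non-cut point, so it is a perfectly legitimate choice of $\eta$ under Definition \ref{eq:SU-cond-2} --- yet it is tangent at $p$. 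With such an $\eta$ your third step collapses (and so does the lemma's conclusion along that geodesic): for perturbed endpoints on the far side of the inner disk the $G$-minimizer of the extension leaves $M$ near $t=0$, the $g$-minimizer must bend around the obstacle, and $d_g\neq d_G$ there. Transversality at $p$ has to be \emph{produced}, not read off: one perturbs $\eta$ to a nearby strictly inward-pointing direction and verifies that minimality, the non-cut property, interior containment, and exit transversality all survive a small perturbation. This openness argument is precisely what the paper outsources to the proof of Theorem~1 of \cite{stefanov2009}.

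Apart from that, your architecture is sound and is in fact more explicit than the paper's own proof, which merely lists the desired properties of $\eta$ (including transversality at $p$, attributed to \cite{stefanov2009}) and then states that the lemma ``follows from the implicit function theorem.'' Your route --- pass to a closed extension $(N,G)$, use the non-cut property to get $d_G$ smooth near $(p,q)$ with a unique minimizer depending continuously in $C^1$, then use a boundary defining function and transversality at both endpoints to confine the perturbed minimizers to $M$ and conclude $d_g=d_G$ --- is a legitimate way of filling in that step. Two smaller items to tighten once the main gap is repaired: Lemma~1 of \cite{stefanov2009} gives extension-independence of the exit time $\ell(x,v)$, not of the cut-point property, so you should justify separately that ``not a cut point'' may be read in $N$; and you should say why the $g$-minimizer from $p$ to $q$ is also a $G$-minimizer of $N$ (no shortcut through $F$) before invoking cut-point theory in the extension.
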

\begin{proof}

We follow the proof of Theorem 1 of \cite{stefanov2009} and show that \ref{eq:SU-cond-2} implies the following claim: There exists $\eta \in S_pM, \hbox{ that is transversal to $\p M$ and }  0< \ell(p,\eta) < \infty,$ $ \gamma_{p,\eta}: [0,\ell(p,\eta)] \to N \hbox{ is distance}$ minimizer and $q:=\gamma_{p,\eta}(\ell(p,\eta))$ is not a cut point to $p$ along $\gamma_{p,\eta}$. The exit direction  $\dot{\gamma}_{p,\eta}(\ell(p,\eta))$  is transversal to $\p M$ and  $\gamma_{p,\eta}((0,\ell(p,\eta))) \subset M^{int}$. Moreover $\ell(p,\eta)=d_{g}(p,q)$. 

The claim of this lemma follows from implicit function theorem.


\end{proof}
Let $p \in \p M$. By Lemma \ref{Le:smoothnes_of_dist_func} there exists $w \in \p M$ and $r>0$ such that  the distance function $d_g$ is smooth in $B(p,r)\times B(w,r)$ and $B(p,r)\cap B(w,r)=\emptyset$. Let $r_{\p M}>0$ be the  minimum of $r$ and the boundary injectivity radius. Choose 
$$
z_0 \in (\p M \cap (B(w,r)) \hbox{ and }\delta \in (0,r_{\p M}),
$$
such that $z_0$ is not the closest boundary point  for any $q \in B(p,\delta)$, $Z(q) \in B(p,r) $ and the distance minimizing geodesic from $z_0$ to $p$ is not normal to $\p M$ at $p$. Then 
\begin{equation}
\label{eq:boundary_def_func}
E_{z_0}:B(p,\delta) \to [0,\infty), \quad E_{z_0}(q):=f_q(z_0)=d_g(z_0,Z(q))-D_q(z_0,Z(q))
\end{equation}
is well-defined and smooth. Moreover, by \eqref{eq:func_f_p_outside_closes_bp} we have that $E_{z_0}(q)=0$ if and only if $q \in B(p,\delta)\cap \p M$. Thus $E_{z_0}$ is a boundary defining function. Denote $(t,Z)$ for the boundary normal coordinates in $B(p,\delta)$, where $t(q)=d_g(\p M, q)$ and $Z(q)$ is the closest boundary point to $q \in B(p,\delta)$. Then the map 
\begin{equation}
\label{eq:boundary_coordinates}
W_{z_0}:B(p,\delta) \to [0,\infty) \times \p M, \quad W_{z_0}(q):=(E_{z_0}(q),Z(q)),
\end{equation} 
is smooth. 

We show that the Jacobian of this map with respect to boundary normal coordinates is invertible at $p$. By the inverse function theorem this yields the existence of a neighborhood $V\subset M$ of $p$ such that the restriction of $W_{z_0}$ to $V$ is a coordinate map. The Jacobian of $W_{z_0}$ at $p$ is
\begin{equation*}
\left(\begin{array}{cc}
\frac{\p}{\p t} E_{z_0}& \frac{\p}{\p t} Z
\\
\\
\frac{\p}{\p Z} E_{z_0}&\frac{\p}{\p Z}Z
\end{array}\right)
=
\left(\begin{array}{cc}
\frac{\p}{\p t} E_{z_0}&\bar 0^T
\\
\\
\frac{\p}{\p Z} E_{z_0}& Id_{n-1}.
\end{array}\right)
\end{equation*}
Notice
$$
\frac{\p}{\p t} E_{z_0}(t,Z)\bigg|_{(t,Z)=(0,p)}=1-g_p(\dot{\gamma}_{z_0,p}(d_g(p,z_0)),\nu)>0.
$$
The last inequlity holds since the distance minimizing geodesic $\gamma_{z_0,p}$ from $z_0$ to $p$ is not normal to the boundary at $p$. Thus  Jacobian of $W_{z_0}$ at $p$ is invertible.

\medskip

We use coordinates similar to $W_{z_0}$ to show that $\Psi:M_1\to M_2$ is a diffeomorphism near the boundary of $M_1$. In order to do so we first prove the following lemma.

\begin{lemma}
Let $(M_i,g_i),\:  i=1,2$ be compact $n$--dimensional Riemannian manifolds with smooth boundaries $\p M_i$. If  the travel time difference data of $(M_1,g_1)$ and $(M_2,g_2)$ coincide, then
\begin{equation}
\label{eq:boundary_metric}
g_1|_{\p M_1}=\phi^{\ast}(g_2|_{\p M_2}).
\end{equation} 
\end{lemma}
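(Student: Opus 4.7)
The plan is to recover the induced Riemannian metric $g|_{\partial M}$ from the restriction $d_g|_{\partial M \times \partial M}$ via a first-order limit, and then invoke the boundary distance identity \eqref{eq:boundary_dist_agree} that was already established.

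First I would prove the following first-order recovery formula: for any $p\in \partial M$, $v\in T_p\partial M$ and any smooth curve $\alpha:(-\varepsilon,\varepsilon)\to \partial M$ with $\alpha(0)=p$, $\dot\alpha(0)=v$,
\begin{equation*}
\|v\|_{g}\;=\;\lim_{t\to 0^+}\frac{d_g(\alpha(0),\alpha(t))}{t}.
\end{equation*}
The upper bound is immediate, since $\alpha|_{[0,t]}$ is an admissible curve in $M$ whose $g$-length equals $t\|v\|_g+o(t)$. For the lower bound I work in a coordinate chart near $p$. Fixing $\varepsilon>0$, the continuity of $g$ gives a neighborhood $U$ of $p$ on which $g \geq (1-\varepsilon)g_p$ in the sense of quadratic forms. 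For $t$ small the upper bound forces any distance minimizing $C^1$ curve in $M$ joining $p$ to $\alpha(t)$, which exists by \cite{alexander1981geodesics}, to remain in $U$; hence its $g$-length is at least $(1-\varepsilon)$ times its length in the frozen metric $g_p$, which in turn is at least $(1-\varepsilon)t\|v\|_g+o(t)$. Sending $t\to 0$ and then $\varepsilon\to 0$ produces the matching lower bound.

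By polarization, the bilinear form $g|_{T_p\partial M}$ is thus determined by $d_g|_{\partial M\times\partial M}$ alone. Applying this characterization on both $(M_1,g_1)$ and $(M_2,g_2)$ and combining it with the boundary distance identity $d_1(\phi^{-1}(p),\phi^{-1}(q))=d_2(p,q)$ from \eqref{eq:boundary_dist_agree}, together with the fact that $\phi$ is a diffeomorphism taking smooth boundary curves to smooth boundary curves, I conclude that for every $p\in\partial M_1$ and every $v\in T_p\partial M_1$,
\begin{equation*}
\|v\|_{g_1}\;=\;\|d\phi_p(v)\|_{g_2},
\end{equation*}
which is exactly the claimed identity $g_1|_{\partial M_1}=\phi^{\ast}(g_2|_{\partial M_2})$.

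The main obstacle is the lower bound in the first-order limit: one must rule out that a distance minimizer from $p$ to $\alpha(t)$ takes a substantially shorter shortcut through the interior (a genuine concern when $\partial M$ is concave). This is handled uniformly by the frozen-metric comparison in a single coordinate patch, once one confirms that the minimizer stays inside the patch, which follows from the already-available upper bound $d_g(p,\alpha(t))\leq t\|v\|_g+o(t)$ and the continuity of $g$. Note that no use of the visibility condition is required at this step; only the boundary distance identity is needed.
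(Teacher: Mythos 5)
Your proof is correct and takes essentially the same route as the paper, which disposes of this lemma in one line by citing Proposition 3.3 of \cite{zhou2012recovery} --- the standard fact that the first-order expansion $\|v\|_g=\lim_{t\to0^+}d_g(\alpha(0),\alpha(t))/t$ along boundary curves recovers the tangential metric from $d_g|_{\p M\times\p M}$, combined with the boundary distance identity \eqref{eq:boundary_dist_agree}; you have simply written out the cited argument in full, including the correct treatment of minimizers that may dip into the interior. One cosmetic point: $g\geq(1-\varepsilon)g_p$ as quadratic forms yields a factor $\sqrt{1-\varepsilon}$ on lengths rather than $1-\varepsilon$, which of course does not affect the limit.
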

\begin{proof}
Since \eqref{eq:equivalent_data} implies \eqref{eq:boundary_dist_agree} the proof of this Lemma follows from the proof of Proposition 3.3. of \cite{zhou2012recovery}.  
%
%

\end{proof}

Now we are ready to prove the following lemma.
\color{black}

\begin{lemma}
\label{Le:boundary_coord}
Let $(M_i,g_i),\:  i=1,2$ be compact $n$--dimensional Riemannian manifolds with smooth boundaries $\p M_i$, whose travel time difference data coincide. Assume that $(M_1,g_1)$ satisfy the visibility condition \ref{de:TTDD_agree}. Let $p \in \p M_1$. There exists a neighborhood $U$ of $p$ in $M_1$ and $z_0 \in \p M_1$ such that on $U$ and $\Psi(U)$  the mappings $W^1_{z_0}(q_1)=(E^1_{z_0}(q_1),Z^1(q_1))$ and $W^2_{\phi(z_0)}(q_2)=(E^2_{\phi(z_0)}(q_2),Z^2(q_2))$ respectively, defined as in \eqref{eq:boundary_def_func} and \eqref{eq:boundary_coordinates},  are smooth local boundary coordinate maps. Moreover, with respect to these coordinates, the local representation of $\Psi$ is
\begin{equation}
\label{eq:local:rep_of_Psi_boundary}
W^1_{z_0}(U)\ni (s,z) \mapsto (s,\phi(z)) \in W^2_{\phi(z_0)}(\Psi(U)).
\end{equation}

\end{lemma}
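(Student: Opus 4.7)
I would build the $M_1$-chart $W^1_{z_0}$ from the visibility condition, transfer the construction to $M_2$ through $\Psi$ via the travel-time-difference identities, and then read off \eqref{eq:local:rep_of_Psi_boundary} as a formal consequence. Concretely, applying the visibility condition at $p \in \p M_1$ together with Lemma~\ref{Le:smoothnes_of_dist_func} to $(M_1,g_1)$ produces a boundary point $q \in \p M_1$ and neighborhoods $U_p \ni p$, $V_q \ni q$ on which $d_1$ is smooth, with the connecting minimizing geodesic transversal to $\p M_1$ at both endpoints and of interior contained in $M_1^{int}$. After shrinking and choosing $z_0 \in V_q \cap \p M_1$ sufficiently close to $q$, the three bullet conditions stated just before \eqref{eq:boundary_def_func} can be arranged, so the Jacobian computation already displayed in the preamble shows that $W^1_{z_0}$ is a smooth local coordinate chart on $U:=B_1(p,\delta)$, possibly after a further shrinking.

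Next I would set up the data identities. For any $q' \in U \subset M_1 \setminus \sigma_{\p M_1}$ the closest boundary point $Z^1(q')$ is the unique minimizer of $f^1_{q',w}$; the identity $f^1_{q',w}(z) = f^2_{\Psi(q'),\phi(w)}(\phi(z))$ obtained in the proof of Lemma~\ref{Le:equivalence_of_cut_loci} forces $\phi(Z^1(q')) = Z^2(\Psi(q'))$. Substituting this into~\eqref{eq:boundary_def_func} and applying \eqref{eq:equivalent_full_data} together with \eqref{eq:boundary_dist_agree} yields $E^1_{z_0}(q') = E^2_{\phi(z_0)}(\Psi(q'))$. Combined, these two identities say $W^2_{\phi(z_0)} \circ \Psi = (\mathrm{id}\times\phi)\circ W^1_{z_0}$ on $U$, which is precisely the representation~\eqref{eq:local:rep_of_Psi_boundary}, \emph{provided} one can independently show that $W^2_{\phi(z_0)}$ is itself a smooth chart on the open set $\Psi(U) \subset M_2$ (open by Proposition~\ref{th:topology}).

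The substantive work is this last clause, since $\Psi$ is not yet known to be smooth. My plan is to transfer the visibility data from $(M_1,g_1)$ to $(M_2,g_2)$ along the specific geodesic from $z_0$ to $p$. Using \eqref{eq:boundary_dist_agree} and compactness of $M_2$, extract a distance-minimizing $C^1$ curve $\gamma_2$ in $M_2$ from $\phi(p)$ to $\phi(z_0)$ of length $d_1(p,z_0)$. An interior touching $\gamma_2(t)\in\p M_2$ with $t\in(0,d_1(p,z_0))$ would, via~\eqref{eq:boundary_dist_agree}, produce a boundary point $\phi^{-1}(\gamma_2(t))\in\p M_1$ lying on a length-minimizing $M_1$-curve from $p$ to $z_0$, contradicting the interior-in-$M_1^{int}$ clause of visibility. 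Transversality at $\phi(p)$ and $\phi(z_0)$, and non-cut-ness of the endpoints, transfer by the boundary metric isometry~\eqref{eq:boundary_metric} combined with the non-normality choice for $z_0$, since the tangential projection of the exit/entry direction is determined by the first variation of the boundary distance, which matches on both sides. Lemma~\ref{Le:smoothnes_of_dist_func} applied to $(M_2,g_2)$ then gives smoothness of $d_2$ near $(\phi(z_0),\phi(p))$, and the same Jacobian calculation as in $M_1$ shows that $W^2_{\phi(z_0)}$ is a smooth chart on a neighborhood of $\phi(p)$ in $M_2$; by continuity of $\Psi$, a final shrinking of $U$ places $\Psi(U)$ inside this chart domain. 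The point I expect to fight hardest is the non-cut and transversality transfer at $\phi(p)$, since the data give direct control only over distances to boundary points, so one must rule out tangential pathologies of $\gamma_2$ using only the boundary isometry and the setup of $z_0$ and $\delta$.
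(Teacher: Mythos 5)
Your proposal is correct and follows essentially the same route as the paper's proof: derive $\phi\circ Z^1=Z^2\circ\Psi$ and $E^1_{z_0}=E^2_{\phi(z_0)}\circ\Psi$ from the data identities, then transfer the visibility configuration to $(M_2,g_2)$ (interior containment via \eqref{eq:boundary_dist_agree}, transversality via the matching boundary gradients from \eqref{eq:boundary_metric}, and non-cut-ness of the endpoint) so that Lemma \ref{Le:smoothnes_of_dist_func} and the Jacobian computation make $W^2_{\phi(z_0)}$ a chart. The non-cut transfer you flag as the hard step is handled in the paper exactly along the lines you indicate, by noting that smoothness of the boundary-restricted distance plus transversality yields a neighborhood of unique minimizers, which excludes conjugate and hence cut points.
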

\begin{proof}
By Lemma \ref{Le:equivalence_of_cut_loci} we have for any $q \in (M_1 \setminus \sigma_{\p M_1})$ that the point $z \in \p M_1$ is the closest boundary to $q$ if and  only if $\phi(z) \in \p M_2$ is the closest boundary point to $\Psi(q) \in (M_2 \setminus \sigma_{\p M_2})$. Thus 
$$
\phi(Z^1(q))=Z^2(\Psi(q)).
$$
Therefore, using  \eqref{eq:boundary_dist_agree} we have that for all $q \in (M_1 \setminus \sigma_{\p M_1}), \: z \in \p M_1$
\begin{equation}
\label{eq:f_p_agree}
f^1_q(z):=d_1(z,Z^1(q))-D_q(z,Z^1(q))=d_2(\phi(z),Z^2(\Psi(q))-D_{\Psi(q)}(\phi(z),Z^2(\Psi(q))=:f^2_{\Psi(q)}(\phi(z)).
\end{equation}

We choose $w \in \p M_1$ neighborhoods $U'$ and $V$ for $p$ and $w$ respectively as in Lemma \ref{Le:smoothnes_of_dist_func} for $(M_1,g_1)$. Then function $(x,z) \mapsto d_1(x,z)$ is smooth in $(U' \cap \p M_1) \times V\cap \p M_1)$. Let $\gamma$ be the unique distance minimizing geodesic from $p$ to $w$ that is transversal to $\p M_1$ at $p$ and $w$. Since $\phi$ is a diffeomorphism by \eqref{eq:boundary_dist_agree} and \eqref{eq:boundary_metric} it follows that
\[
D\phi\;\bigg( \hbox{grad}'_1\;d_1(\cdot,w)\bigg|_{p}\bigg)= \hbox{grad}'_2\;d_2(\cdot,\phi(w))\bigg|_{\phi(p)} .
\]
Here $\hbox{grad}'_i$, $i\in \{1,2\}$ stands for the boundary gradient. Therefore, a $g_2$--distance minimizing unit speed curve $c$ from $\phi(p)$ to $\phi(w)$ is transversal to $\p M_2$ at $\phi(p)$. Switching the order of $p$ and $w$ we prove also that $c$ is transversal to $\p M_2$ at $\phi(w)$. Since $\gamma$ is the unique distance minimizing curve from $p$ to $w$ and $\gamma((0,d_1(p,w))) \subset M_1^{int}$ it holds by \eqref{eq:boundary_dist_agree} that  $c((0,d_2(\phi(p),\phi(w)))) \subset M_2^{int}$. Therefore $c$ is a geodesic of $g_2$. Since $d_2(\phi(p),\cdot)|_{\p M}$ is smooth at $\phi(w)$, $c$ is the unique distance minimizing curve of $(M_2,g_2)$ connecting $\phi(p)$ to $\phi(w)$. Moreover due to transversality of $c$ there exists a neighborhood of $\phi(w)$ such that any point in this neighborhood is connected to $\phi(p)$ via the unique distance minimizing geodesic. Since conjugate points of $\phi(p)$ in $(M_2,g_2)$ are accumulation points of those points $q\in M_2$ that can be connected to $\phi(p)$ via multiple distance minimizers, it holds that $\phi(w)$ is not either a conjugate point of $\phi(p)$ along $c$. Therefore $\phi(w)$ is not a cut point of $\phi(p)$ along $c$. This proves that also $(M_2,g_2)$ satisfies the visibility condition.

By Lemma \ref{Le:smoothnes_of_dist_func} we have proved that there exists $r_{\min}>0$ smaller than the minimum of the boundary cut distances of $g_1$ and $g_2$, such that functions 
\[
(q,z) \mapsto d_1(q,Z^1(q)), \:d_1(q,z), \:d_1(z,Z^1(q)), \quad (q,z) \in B_1(p,r_{\min}) \times (B_1(w,r_{\min})\cap \p M_1)
\]
and
\[
(q',z') \mapsto d_2(q',Z^2(q')), \:d_2(q',z'),\: d_2(z',Z^2(q)), \quad (q',z') \in B_2(\phi(p),r_{\min}) \times (B_2(\phi(w),r_{\min})\cap \p M_2)
\]
are smooth. Since $\Psi$ is a homeomorphism the existence of set $U$ and $z_0 \in \p M_1$ as in the claim of this Lemma follow.

If $q \in U$ we obtain by \eqref{eq:f_p_agree} the following equation
$$
E^1_{z_0}(q)=E^2_{\phi(z_0)}(\Psi(q)).
$$
Therefore we have proven that the map given in \eqref{eq:local:rep_of_Psi_boundary} and the mapping
$$
W^2_{\phi(z_0)}\circ \Psi\circ (W^1_{z_0})^{-1}:W^1_{z_0}(U) \to W^2_{\phi(z_0)}(\Psi(U))
$$ 
coincide.

\end{proof}
\color{black}

Next we consider the coordinates away from $\p M$. Let $p \in M^{int}$ and choose any closest boundary point $z_p\in \p M$ to $p$. By Lemma 2.15 of \cite{Katchalov2001} there exist neighborhoods $U \subset M^{int}$ of $p$ and $W \subset \p M$ of $z_p$ such that the distance function $d_g:U\times W \to \R$ is smooth. Moreover for every $(q,w) \in U\times W$ the distance $d_g(q,w)$ is realized by the unique distance minimizing geodesic, contained in $M^{int}$, if the end point $w$ is excluded. We use a shorthand notation $v \in S_p M$ for the velocity $\dot \gamma_{z_p,\nu}(d_g(p,z_p))$.  A similar argument as in Lemma 2.6. of \cite{LaSa} yields to an existence of a neighborhood $V \subset W$ of $z_p$ such that the set  
$$
\mathcal{V}=\{(z_i)_{i=1}^n \in V^n: \dim \hbox{span}((F(z_i)-v)_{i=1}^n)=n\} 
$$
is open and dense in $V^n:= V\times V \times \ldots \times V$. Here $F(q):=-\frac{(\exp_p)^{-1}(q)}{\|(\exp_p)^{-1}(q)\|_g}, \: q \in V$.  Notice that this claims follows from Lemma 2.6. of \cite{LaSa} since $F(q)=\frac{(\exp_p)^{-1}(q')}{\|(\exp_p)^{-1}(q')\|_g}$ for some $q'\in M$ if and only if there exists $ 0<t<\tau(p,-F(q))$ such that $q'=\gamma_{p,-F(p)}(t)$.

Moreover for every $(z_i)_{i=1}^n \in \mathcal{V}$ there exists an open neighborhood $U' \subset U$ of $p$ such that 
$$
H:U'\rightarrow \R^n, \quad H(q)=(d_g(q,z_i)-d_g(q,z_p))_{i=1}^n
$$
is a smooth coordinate mapping. This holds, since for any $(z_i)_{i=1}^n \in \mathcal{V}$ the Jacobian of $H$ at $p$ is invertible. 

\begin{lemma}
\label{Le:interior_coord}
Let $(M_i,g_i),\:  i=1,2$ be compact $n$--dimensional Riemannian manifolds with smooth boundaries $\p M_i$. Suppose that the travel time difference data of $(M_1,g_1)$ and $(M_2,g_2)$ coincide. Let $p \in M_1^{int}$. Let $z_p$ be any closest boundary point to $p$. There exists a neighborhood $U$ of $p$ in $M_1^{int}$ and a neighborhood $W \subset \p M_1$ of $z_p$ such that the distance functions $d_1:U \times W$ of $(M_1,g_1)$ and $d_2:\Psi(U) \times \phi(W)$ of $(M_2,g_2)$ are smooth. 

Moreover there exists points $z_1, \ldots, z_n \in W$ and a neighborhood $V \subset U$ of $p$ such that  
$$
H_1:V\rightarrow \R^n, \quad H_1(x)=(d_1(x,z_i)-d_1(x,z_p))_{i=1}^n
$$
and
$$
H_2:\Psi(V)\rightarrow \R^n, \quad H_2(q)=(d_2(q,\phi(z_i))-d_2(q,\phi(z_p)))_{i=1}^n,
$$
are smooth coordinate maps. We also have 
\begin{equation}
\label{eq:local:rep_of_Psi_interior}
H_1(V)= H_2(\Psi(V)) \hbox{ and } H_2 \circ \Psi\circ H_1 = Id_{\R^n}.
\end{equation}

\end{lemma}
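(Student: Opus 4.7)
The plan is to construct $H_1$ and $H_2$ essentially independently on each manifold, using the framework already sketched before the lemma statement, and then to observe that the travel time difference identity forces them to agree through $\Psi$. Crucially, we cannot derive smoothness on $(M_2,g_2)$ by transport along $\Psi$, since $\Psi$ is at this point only known to be a homeomorphism.

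First I would apply Lemma 2.15 of \cite{Katchalov2001} to $(M_1,g_1)$ at $p$ with closest boundary point $z_p$, obtaining neighborhoods $U_1\subset M_1^{int}$ of $p$ and $W_1\subset\p M_1$ of $z_p$ on which $d_1:U_1\times W_1\to\R$ is smooth and realized by a unique interior-avoiding distance minimizer. To import the analogous statement to $(M_2,g_2)$, I note that $\Psi$ is a homeomorphism mapping $\p M_1$ to $\p M_2$, so $\Psi(p)\in M_2^{int}$, and the identity $D_q=D_{\Psi(q)}\circ\phi$ yields
\[
d_1(p,z)-d_1(p,z_p)=d_2(\Psi(p),\phi(z))-d_2(\Psi(p),\phi(z_p))\quad\text{for every }z\in\p M_1.
\]
Hence $\phi(z_p)$ is a closest boundary point of $\Psi(p)$ in $\p M_2$, and Lemma 2.15 of \cite{Katchalov2001} applied intrinsically to $(M_2,g_2)$ at $\Psi(p)$ gives neighborhoods $U_2$ of $\Psi(p)$ and $W_2$ of $\phi(z_p)$ with the same smoothness and unique-minimizer properties. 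Setting $U:=U_1\cap\Psi^{-1}(U_2)$ and $W:=W_1\cap\phi^{-1}(W_2)$ delivers the first conclusion of the lemma.

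To choose the $z_i$'s, I would invoke the density argument stated just before the lemma (i.e.\ Lemma 2.6 of \cite{LaSa}) on each manifold: the subset $\mathcal{V}_1\subset W^n$ of tuples making the Jacobian of $H_1$ invertible at $p$ is open and dense, and similarly there is an open dense subset $\mathcal{V}_2\subset\phi(W)^n$ for $(M_2,g_2)$ at $\Psi(p)$ with closest boundary point $\phi(z_p)$. Since $\phi:\p M_1\to\p M_2$ is a diffeomorphism, $\phi^{-1}(\mathcal{V}_2)$ is open and dense in $W^n$, and by the Baire property we may pick $(z_1,\dots,z_n)\in\mathcal{V}_1\cap\phi^{-1}(\mathcal{V}_2)$. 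The inverse function theorem then provides neighborhoods $V\subset U$ of $p$ and $V'\subset\Psi(U)$ of $\Psi(p)$ on which $H_1$ and $H_2$ are smooth coordinate charts.

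Finally, the coincidence of travel time difference data yields directly, for every $q\in M_1$,
\[
H_2(\Psi(q))=\bigl(D_{\Psi(q)}(\phi(z_i),\phi(z_p))\bigr)_{i=1}^{n}=\bigl(D_q(z_i,z_p)\bigr)_{i=1}^{n}=H_1(q),
\]
with no residual additive constant to worry about because the differences $d(\cdot,z_i)-d(\cdot,z_p)$ are already the output of $D$. Continuity of $\Psi$ permits shrinking $V$ so that $\Psi(V)\subset V'$; then $H_2(\Psi(V))=H_1(V)$ and $H_2\circ\Psi\circ H_1^{-1}=\mathrm{Id}_{H_1(V)}$. The main obstacle I anticipate is a bookkeeping one rather than a conceptual one: ensuring that Lemma 2.15 of \cite{Katchalov2001} and the density argument of \cite{LaSa} are genuinely applicable to $(M_2,g_2)$ from intrinsic information only, i.e.\ from the closest-boundary-point structure at $\Psi(p)$ transferred through the data identity, without smuggling in any regularity of $\Psi$ that has not yet been proven.
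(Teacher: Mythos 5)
Your proof is correct and follows essentially the route the paper intends: the paper's own proof is only a two-line sketch deferring to the construction preceding the lemma and to Theorem 2.7 of \cite{LaSa}, and your argument fills in exactly those steps (transferring the closest-boundary-point $\phi(z_p)$ to $\Psi(p)$ via the data identity, intersecting the two open dense sets of admissible tuples so that \emph{both} Jacobians are invertible, and reading off $H_2\circ\Psi=H_1$). Note that you correctly interpret the displayed identity as $H_2\circ\Psi\circ H_1^{-1}=\mathrm{Id}$ on $H_1(V)$, which is what the statement must mean.
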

\begin{proof}
Since $\Psi$ is a homeomorphism, the first part of the claim follows from similar construction as done before this Lemma. The proof of the latter part is a modification of the proof of Theorem 2.7. of \cite{LaSa}. 
\end{proof}

\begin{proposition}
\label{th:diffeo}
Let $(M_i,g_i),\:  i=1,2$ be compact $n$--dimensional Riemannian manifolds with smooth boundaries $\p M_i$ whose travel time difference data coincide. If $(M_1,g_1)$ satisfy the visibility condition \ref{eq:SU-cond-2}, then mapping $\Psi:M_1 \to M_2$, given in \eqref{eq:map_psi}, is a diffeomorphism.
\end{proposition}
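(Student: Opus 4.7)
The plan is to combine the two local-coordinate lemmas already established, together with the fact from Proposition \ref{th:topology} that $\Psi$ is a homeomorphism, to conclude smoothness in both directions. Since diffeomorphism is a local property for a given homeomorphism, it suffices to show that for every $p \in M_1$ there is an open neighborhood on which $\Psi$ has a smooth local representation with smooth inverse.

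First I would treat interior points. Given $p \in M_1^{int}$, invoke Lemma \ref{Le:interior_coord} to obtain a neighborhood $V \subset M_1^{int}$ of $p$ and smooth coordinate maps $H_1 : V \to \R^n$, $H_2 : \Psi(V) \to \R^n$, defined in terms of the data, with
\[
H_2 \circ \Psi \circ H_1^{-1} = \mathrm{Id}_{\R^n}\big|_{H_1(V)}.
\]
Thus on $V$ the map $\Psi$ is the composition of two smooth chart maps (one with its inverse), so it is smooth, and the same identity shows that $\Psi^{-1}$ is smooth on $\Psi(V)$.

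Next I would handle boundary points. For $p \in \partial M_1$, apply Lemma \ref{Le:boundary_coord} to obtain a neighborhood $U$ of $p$ in $M_1$ and a point $z_0 \in \partial M_1$ such that $W^1_{z_0} : U \to W^1_{z_0}(U)$ and $W^2_{\phi(z_0)} : \Psi(U) \to W^2_{\phi(z_0)}(\Psi(U))$ are smooth local coordinate maps, and in these coordinates
\[
W^2_{\phi(z_0)} \circ \Psi \circ (W^1_{z_0})^{-1}(s,z) = (s,\phi(z)).
\]
Because $\phi : \partial M_1 \to \partial M_2$ is a diffeomorphism, the map $(s,z) \mapsto (s,\phi(z))$ is a diffeomorphism from $W^1_{z_0}(U)$ onto $W^2_{\phi(z_0)}(\Psi(U))$, with inverse $(s,z') \mapsto (s,\phi^{-1}(z'))$. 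Therefore $\Psi|_U$ and $\Psi^{-1}|_{\Psi(U)}$ are smooth.

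Finally I would assemble these local statements. Since every point of $M_1$ lies either in $M_1^{int}$ or on $\partial M_1$, the two cases above cover $M_1$ and, under $\Psi$, cover $M_2$. Hence $\Psi$ is smooth on $M_1$ and $\Psi^{-1}$ is smooth on $M_2$, so $\Psi$ is a diffeomorphism as claimed. The main work has already been absorbed by Lemma \ref{Le:boundary_coord}, which required the visibility condition on $(M_1,g_1)$ and, along the way, showed that $(M_2,g_2)$ inherits the visibility condition; this inheritance is what legitimises applying the same coordinate construction in the $M_2$-direction and is the only substantive point one must be careful to invoke here.
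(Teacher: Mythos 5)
Your proposal is correct and follows essentially the same route as the paper, whose proof of this proposition is simply to cite Proposition \ref{th:topology} together with Lemmas \ref{Le:boundary_coord} and \ref{Le:interior_coord}; you have merely spelled out the local-to-global assembly that the paper leaves implicit. (Your use of $H_1^{-1}$ in the interior identity is in fact the correct reading of the paper's formula \eqref{eq:local:rep_of_Psi_interior}.)
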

\begin{proof}
The claim follows from Proposition \ref{th:topology} and lemmas \ref{Le:boundary_coord}--\ref{Le:interior_coord}. 
\end{proof}

\subsection{Riemannian structure}
\label{Se:Riemannian}
As we have proven that the map $\Psi$ is diffeomorphism we can define a pull back metric $\widetilde g:=\Psi^\ast g_2$ on $M_1$. From now on we only consider manifold $M:=M_1$ with smooth boundary equipped with Riemannian metrics $g:=g_1$ and $\widetilde g$. We need  to show that $g=\widetilde g$.
First we notice that by the definitions of the diffeomorphism $\Psi$ and metric $\widetilde g$ on $M$ we have by the data \eqref{eq:data_full} that
\begin{equation}
\label{eq:dist_dif_agree}
D_p(z,w)=d_g(p,z)-d_g(p,w)=d_{\widetilde g}(p,z)-d_{\widetilde g}(p,w), \quad p \in M, \: z,w \in \p M.
\end{equation}

\begin{lemma}
\label{Le:jet}
Let $p \in \p M$ and $(x^1,\ldots, x^n)$ be a boundary normal coordinate system of $g$ near $p$ and $\alpha\in \N^{n}$ any multi-index. Write $g=(g_{ij})_{i,j=1}^{n}$ and $\widetilde g=(\widetilde g_{ij})_{i,j=1}^{n}$. Then for all $i,j \in \{1, \ldots,n\}$ holds
\begin{equation}
\label{eq:jet}
\partial^\alpha g_{ij} |_{\p M}= \partial^\alpha \widetilde g_{ij} |_{\p M}, \quad \p^\alpha:=\prod_{k=1}^n \left(\frac{\p}{\p x^k}\right)^{\alpha_k}.
\end{equation}
\end{lemma}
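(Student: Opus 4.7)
The plan is to fix $p_0 \in \p M$, work in a $g$-boundary-normal chart at $p_0$ (so $g_{nn} \equiv 1$, $g_{\alpha n} \equiv 0$ throughout the chart; here and below $\alpha, \beta$ denote tangential indices), and prove the result by induction on the normal order. By the visibility condition for $(M,g)$ together with the argument in the proof of Lemma~\ref{Le:boundary_coord} (which transfers visibility to $\widetilde g$), there exist $q_0 \in \p M$ and neighborhoods $U \ni p_0$, $V \ni q_0$ such that both $d_g, d_{\widetilde g}: U \times V \to \R$ are smooth and realized by transversal distance-minimizing geodesics. Equation~\eqref{eq:dist_dif_agree} shows that $c(p) := d_g(p,q) - d_{\widetilde g}(p,q)$ is independent of $q \in \p M$; hence $c \in C^\infty(U)$ with $c|_{U \cap \p M} \equiv 0$, so all purely tangential derivatives of $c$ vanish on the boundary and in particular $\p_\alpha d_g|_{\p M \cap U} = \p_\alpha d_{\widetilde g}|_{\p M \cap U}$ for tangential $\alpha$. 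The induction statement is: for every $k \ge 0$ and every $p_0 \in \p M$, $\p_n^k \widetilde g_{ij}(p_0) = \p_n^k g_{ij}(p_0)$ for all $i,j$ and $\p_n^{k+1} c|_{\p M \cap U} \equiv 0$; combined with the tangential agreement, this yields \eqref{eq:jet}.

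For the base case $k=0$, set $\mu_\alpha := \widetilde g_{\alpha n}(p_0)$, $\lambda := \widetilde g_{nn}(p_0)$, $K := \lambda - g^{\alpha\beta}(p_0)\mu_\alpha \mu_\beta$, and express $\widetilde g^{ij}(p_0)$ via the Schur complement. With $\xi_i := \p_i d_g|_{p_0}$, $\widetilde\xi_i := \p_i d_{\widetilde g}|_{p_0}$ and $\xi_\alpha = \widetilde \xi_\alpha$, a direct computation reduces the $\widetilde g$-eikonal, combined with the $g$-eikonal $g^{\alpha\beta}\xi_\alpha\xi_\beta + \xi_n^2 = 1$, to $(\widetilde\xi_n - m)^2 = K \xi_n^2$ with $m := g^{\alpha\beta}(p_0)\mu_\beta \xi_\alpha$. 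Selecting signs so that $\xi_n, \widetilde\xi_n < 0$ gives $\p_n c(p_0) = \xi_n - \widetilde\xi_n = (1-\sqrt K)\xi_n - m$. As $q$ varies in $V \cap \p M$, the map $q \mapsto -\dot\gamma^g_{p_0,q}(0)$ is a local diffeomorphism (since $q_0$ is not a cut point of $p_0$), so $\xi_\alpha$ sweeps out an open set in $T^\ast_{p_0}\p M$ while $\xi_n = -\sqrt{1 - g^{\alpha\beta}(p_0)\xi_\alpha \xi_\beta}$. Since $1 - g^{\alpha\beta}\xi_\alpha\xi_\beta$ is not a perfect square in the polynomial ring, requiring $\p_n c(p_0)$ to be a single constant (independent of $\xi_\alpha$) forces the irrational coefficient $(1-\sqrt K)$ and the linear polynomial $m$ to vanish separately: $\sqrt K = 1$ and $\mu = 0$. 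Hence $\lambda = 1$, $\p_n c(p_0) = 0$, and $\widetilde g_{ij}(p_0) = g_{ij}(p_0)$ for all $i,j$.

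For the inductive step, assume the claim holds up to order $k-1$. Applying $\p_n^k$ to both eikonal equations and evaluating at $p_0$, the inductive hypothesis cancels every term that does not involve a $k$-th order normal derivative of the metric or a $(k+1)$-th order normal derivative of $c$. The residual identity has exactly the algebraic shape of the base case, now linear in the unknowns $\p_n^k \widetilde g_{\alpha n}(p_0)$, $\p_n^k \widetilde g_{nn}(p_0)$, $\p_n^k(\widetilde g^{\alpha\beta} - g^{\alpha\beta})(p_0)$ and $\p_n^{k+1} c(p_0)$; the same polynomial-vs-irrational separation in $\xi_\alpha$ forces each of these to vanish, which by inversion yields $\p_n^k \widetilde g_{\alpha\beta}(p_0) = \p_n^k g_{\alpha\beta}(p_0)$ and completes the induction.

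The main obstacle is the combinatorial bookkeeping in the inductive step: high normal derivatives of the eikonal equation produce a large number of mixed terms involving derivatives of $d_g$, $d_{\widetilde g}$, $g^{ij}$ and $\widetilde g^{ij}$, and one must verify in detail that, after substituting the inductive hypothesis, the residual identity at $p_0$ really has the same algebraic shape as the base case so that the polynomial-vs-irrational separation applies uniformly.
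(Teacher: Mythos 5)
Your proof is correct, but it takes a genuinely different route from the paper's. The paper never touches the eikonal equation directly: it uses Lemma \ref{Le:smoothnes_of_dist_func} together with \eqref{eq:boundary_dist_agree}, \eqref{eq:boundary_metric} and \eqref{eq:boundary_gradients} to show that the \emph{local lens relations} of $g$ and $\widetilde g$ coincide on an open set $\mathcal D\subset T\p M$ of transversal directions, and then invokes the jet determination of Theorem 1 of \cite{stefanov2009} as a black box. You instead prove the jet determination from scratch by induction on the normal order, using the eikonal equation in $g$-boundary-normal coordinates and the observation (from \eqref{eq:dist_dif_agree}) that $c:=d_g(\cdot,q)-d_{\widetilde g}(\cdot,q)$ is one smooth function on $U$, independent of $q$ and vanishing on $\p M$. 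Your base case is sound: the branch choice $\widetilde\xi_n<0$ is forced by transversality of the $\widetilde g$-minimizer (which follows from the visibility transfer carried out in the proof of Lemma \ref{Le:boundary_coord}), and the ``polynomial versus $\sqrt{1-|\xi'|^2_g}$'' separation is legitimate because $q\mapsto\xi'$ is an open map by non-conjugacy. As for the bookkeeping worry you flag in the inductive step: it does close, and the cleanest way to see this is to subtract the two eikonal equations once and for all, obtaining on $U$ the single identity
\[
\bigl(\widetilde g^{ij}-g^{ij}\bigr)\,\p_iu\,\p_ju-2\,\widetilde g^{ij}\,\p_iu\,\p_jc+\widetilde g^{ij}\,\p_ic\,\p_jc=0,\qquad u:=d_g(\cdot,q).
\]
Applying $\p_n^k$ and evaluating at $p_0$, the inductive hypothesis (vanishing on $\p M\cap U$ of all normal derivatives of $\widetilde g^{ij}-g^{ij}$ of order $<k$ and of $\p_n^jc$ for $j\le k$) annihilates every term except
\[
\p_n^k\bigl(\widetilde g^{ij}-g^{ij}\bigr)(p_0)\,\xi_i\xi_j-2\,\xi_n\,\p_n^{k+1}c(p_0)=0,
\]
which has the form $P(\xi')+Q(\xi')\xi_n=0$ with $P,Q$ polynomial in $\xi'$, and the same separation argument finishes the step. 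What your approach buys is a self-contained proof independent of \cite{stefanov2009}; what it costs is exactly this computation, which the paper outsources. Two points you should make explicit in a full write-up: you are using $g_1|_{\p M_1}=\phi^{\ast}(g_2|_{\p M_2})$, i.e.\ \eqref{eq:boundary_metric}, when you identify the tangential block of $\widetilde g(p_0)$ with $g_{\alpha\beta}(p_0)$ in the Schur complement; and the induction hypothesis must be maintained on a whole boundary neighborhood of $p_0$, not just at $p_0$, so that tangential derivatives of $\p_n^jc$ and of the lower-order normal derivatives of the metric difference also vanish there --- this is available because the visibility condition lets you run the argument at every nearby boundary point with the same $q\in V\cap\p M$.
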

\begin{proof}
We prove that the local lens relations $(\ell_g,\sigma_g)$ and $(\ell_{\widetilde g},\sigma_{\widetilde g})$ of $g$ and $\widetilde g$ respectively coincide at some open set $\mathcal D \subset T\p M$. After this the claim follows from the proof of Theorem 1 of \cite{stefanov2009}. For the definitions of local lens relations see \cite{stefanov2009}.

Choose $q \in \p M$ and neighborhoods $U,V \subset M$ of $p$ and $q$ be as in Lemma \ref{Le:smoothnes_of_dist_func} for metric $g$. Let $\gamma$ be the unique geodesic of $g$  connecting $p$ to $q$.
Due to \eqref{eq:boundary_dist_agree}
and Lemma \ref{Le:smoothnes_of_dist_func} it holds that $d_{\widetilde g}$ is smooth on $(U \cap \p M) \times (V \cap \p M)$.  Therefore we have for every $(x,y) \in (U \cap \p M) \times (V \cap \p M)$ that
\begin{equation}
\label{eq:boundary_gradients}
\hbox{grad}'_g \; d_g(\cdot,y)\bigg|_{x}=\hbox{grad}'_{\widetilde g} \; d_{\widetilde g}(\cdot,y)\bigg|_{x} \quad \hbox{ and } \quad  \hbox{grad}'_g \; d_g(\cdot,x)\bigg|_{y}=\hbox{grad}'_{\widetilde g} \; d_{\widetilde g}(\cdot,x)\bigg|_{y}.
\end{equation}

Denote $\dot{\gamma}(0)=:\eta$ and $\dot{\gamma}(d_g(p,q))=:v$. Then \eqref{eq:boundary_metric} and \eqref{eq:boundary_gradients} imply that $\dot{\widetilde \gamma}(0)=\eta$ and $\dot{\widetilde \gamma}(d_g(p,q))=v$, where $\widetilde \gamma$ is the unique distance minimizing geodesic of $\widetilde g$ from $p$ to $q$. By Lemma \ref{Le:smoothnes_of_dist_func} it holds that $\eta$ and $v$ are transversal to $\p M$. 

Therefore after  possibly shrinking $U$ and $V$ we have by formula (10) of \cite{stefanov2009} and formulas \eqref{eq:boundary_metric} and \eqref{eq:boundary_gradients} that the  local lens relations $(\ell_g,\sigma_g)$ and $(\ell_g,\sigma_{\widetilde g})$ coincide in the set
\[
\mathcal{D}:=\{ \hbox{grad}'_g \;d_g(\cdot,y)\bigg|_{x}, \: \hbox{grad}'_g \; d_g(\cdot,x)\bigg|_{y} \in T\p M: \: (x,y) \in (U \cap \p M) \times (V \cap \p M)\}.
\]
The set $\mathcal D$ is open since it is an image of an open map, given by the composition of the  diffeomorphism
\[
 W_\eta\ni (x,v) \mapsto \gamma_{x,v}(\ell(x,v)),\dot \gamma_{x,v}(\ell(x,v)) \in W_v 
\]
and the orthogonal projection from $\p SM$ to $T\p M$. In the above $W_\eta\subset \p SM$ is some open neighborhood of $(p,\eta)$ and $ W_v\subset \p SM$ is some open neighborhood of $(q,v)$.  
\end{proof}
\color{black}

Let $(N,G)$ be a smooth closed Riemannian manifold that is a smooth extension of $(M,g)$. We write $F:= N \setminus M^{int}$, as before. By Lemma \ref{Le:jet} $(N,\widetilde G)$ is a smooth extension of $(M,\widetilde g)$, if $\widetilde G$ is a Riemannian metric defined as 
\begin{equation}
\label{eq:def_G_tilde}
\widetilde G|_F=G|_F, \quad \widetilde G|_{M^{int}}=\widetilde g.
\end{equation}
\begin{lemma}
Let $N, F, G$ and $\widetilde G$ be as above. Then 
\begin{equation}
\label{eq:dist_dif_on_N}
d_G(p,z)-d_G(p,w)=d_{\widetilde G}(p,z)-d_{\widetilde G}(p,w) \quad p \in N, \: z,w \in F.
\end{equation}
The functions $d_G, d_{\widetilde G}$ are the geodesic distances of $G$ and $\widetilde G$ respectively.
\end{lemma}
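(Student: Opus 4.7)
The goal is to upgrade the boundary-level identity \eqref{eq:dist_dif_agree} to the identity \eqref{eq:dist_dif_on_N} on $F$. My approach is to express each of $d_G(p,z)$, $d_{\widetilde G}(p,z)$ as an infimum of concatenations whose constituent distances fall into three categories: (A) distances $d_F$ in $\overline F$, coinciding for $G$ and $\widetilde G$ since $G|_F = \widetilde G|_F$ by \eqref{eq:def_G_tilde}; (B) boundary-to-boundary distances $d_g|_{\partial M\times\partial M}$, coinciding for $g$ and $\widetilde g$ by \eqref{eq:boundary_dist_agree} (which is a direct consequence of \eqref{eq:dist_dif_agree}); and (C) interior-to-boundary distances $d_g(p,\cdot)|_{\partial M}$, which by \eqref{eq:dist_dif_agree} satisfy $d_g(p,\zeta)-d_{\widetilde g}(p,\zeta)=C(p)$ with $C(p)$ depending only on $p$.

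\textbf{Step 1.} For $p\in N$ and $z\in F$, take a $G$-minimizing curve $\gamma$ from $p$ to $z$. Either $\gamma\subset\overline F$ (only possible if $p\in F$), giving $d_G(p,z)=d_F(p,z)$, or $\gamma$ meets $\partial M$; in that case, letting $\zeta\in\partial M$ be the final crossing, the piece $\gamma|_{[\zeta,z]}$ lies in $\overline F$ with length $d_F(\zeta,z)$ while $\gamma|_{[p,\zeta]}$ realizes $d_G(p,\zeta)$. A standard concatenation argument then gives
\[
d_G(p,z) \;=\; \min\!\Bigl\{\mathbf{1}_{p\in F}\,d_F(p,z),\ \inf_{\zeta\in\partial M}\bigl\{d_G(p,\zeta)+d_F(\zeta,z)\bigr\}\Bigr\},
\]
and an identical formula for $d_{\widetilde G}$.

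\textbf{Step 2.} Iterate the splitting on $d_G(p,\zeta)$ for $\zeta\in\partial M$. When $p\in M^{int}$, push the first boundary exit of any $G$-minimizer to the outer decomposition: the portion from $p$ to the first crossing stays in $\overline M$ and so has length $d_g(p,\zeta_1')$ for some $\zeta_1'\in\partial M$. The resulting representation expresses $d_G(p,\zeta)$ as an infimum over finite sequences of boundary crossings of sums that contain exactly one term of type (C) together with terms of types (A) and (B); when $p\in F$ (in particular when $p\in\partial M$), only terms of types (A) and (B) appear. The iteration terminates in finitely many steps by compactness of $M$ together with the visibility condition \ref{eq:SU-cond-2}, which rules out tangential accumulation of $\partial M$-crossings along minimizers; this is the technique of Proposition~7.3 in \cite{ivanov2018distance}.

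\textbf{Step 3 and main obstacle.} Substituting Step 2 into Step 1, every summand transforms correctly under $G\leftrightarrow\widetilde G$: types (A) and (B) are invariant, and the single type (C) summand contributes an additive shift of $C(p)$; since an infimum of uniformly shifted expressions is shifted by the same constant, we conclude that $d_G(p,z)-d_{\widetilde G}(p,z)=C(p)$ is a function of $p$ alone (with $C(p)=0$ when $p\in F$). Taking differences in $z$ yields \eqref{eq:dist_dif_on_N}. The main technical obstacle lies in Step~2: making the iterative decomposition precise and rigorously excluding accumulation of boundary crossings along a single $G$-minimizer. Here Lemma \ref{Le:jet}, which ensures that $\widetilde G$ is smooth and agrees with $G$ to infinite order at $\partial M$, combined with the visibility condition, forces the well-behaved crossing structure underlying the finite-termination argument.
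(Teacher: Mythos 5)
Your overall strategy -- expressing $d_G(p,z)$ as an infimum over alternating concatenations of $d_F$-pieces, boundary-to-boundary $d_g$-pieces, and a single interior-to-boundary $d_g$-piece, and then observing that the first two types are $G\leftrightarrow\widetilde G$ invariant while the third shifts by a constant $C(p)$ -- is exactly the decomposition the paper uses (its formula \eqref{eq:proof_of_(31)}). The gap is in your Step 2, and you have correctly identified where the danger lies but offered the wrong cure. Neither the visibility condition \ref{eq:SU-cond-2} nor Lemma \ref{Le:jet} rules out a $G$-minimizer meeting $\partial M$ in an infinite set: the visibility condition only asserts, for each boundary point, the \emph{existence} of one well-behaved minimizing geodesic to some other boundary point, and says nothing about the crossing structure of arbitrary minimizers; likewise the infinite-order agreement of $G$ and $\widetilde G$ at $\partial M$ is irrelevant here. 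A minimizing geodesic of the closed manifold $(N,G)$ can intersect the hypersurface $\partial M$ tangentially with accumulating intersection points, or even lie inside $\partial M$ on a whole subinterval (e.g.\ where $\partial M$ is totally geodesic), so "the final crossing" and the iterative splitting on an actual minimizer are not well defined in general, and the claimed finite termination does not follow from compactness.

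The fix -- and this is what the paper does -- is to prove the infimum formula without ever decomposing a true minimizer. For the inequality $\inf(\cdots)\le d_G(p,z)$, take any $\epsilon>0$ and a piecewise smooth curve $c$ from $p$ to $z$ with $\mathcal L_G(c)\le d_G(p,z)+\epsilon$ which crosses $\partial M$ transversally at finitely many points $y_0,x_1,y_1,\dots,x_N$; such a curve exists because $\partial M$ is a smooth codimension-one submanifold, so an arbitrarily small perturbation achieves finite transversal crossings. Each arc of $c$ between consecutive crossings lies entirely in $M$ or entirely in $\overline F$, so its length dominates the corresponding $d_g$ or $d_F$ term, giving $d_g(p,y_0)+\sum_j\bigl(d_F(y_{j-1},x_j)+d_g(x_j,y_j)\bigr)+d_F(x_N,z)\le \mathcal L_G(c)$. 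The reverse inequality $\inf(\cdots)\ge d_G(p,z)$ is immediate from the triangle inequality for $d_G$ together with $d_g\ge d_G$ on $M\times M$ and $d_F\ge d_G$ on $\overline F\times\overline F$ (fewer curves are admissible in the constrained infima). With the formula established this way, your Step 3 goes through verbatim, and no control on the crossing set of an actual minimizer is ever needed.
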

\begin{proof}
If $p \in M$, we will soon give  a proof for
\begin{equation}
\label{eq:dist_dif_on_N_2}
d_G(p,z)-d_G(p,w)=d_{\widetilde G}(p,z)-d_{\widetilde G}(p,w), \quad z,w \in F.
\end{equation}  
This proof is an adaptation of Proposition 7.3 in \cite{ivanov2018distance}. If \eqref{eq:dist_dif_on_N_2} holds for every $p\in M$ then   \eqref{eq:dist_dif_on_N_2} holds also for the case $p \in F$. The latter proof is given in Proposition 1.2. of \cite{LaSa}. Therefore  equation \eqref{eq:dist_dif_on_N} holds.

Let $p \in M$. Consider first the function $h_p(z)=d_{g}(p,z)-d_{\widetilde g}(p,z), \: z \in \p M$. 
Let $w \in \p M$. By \eqref{eq:dist_dif_agree} it holds that
\[
h_p(z)=d_{g}(p,w)-d_{\widetilde g}(p,w). 
\] 
Thus $h_p$ is a constant function.

\medskip We will prove that

\begin{equation}
\label{eq:proof_of_(31)}
d_G(p,z)=\inf\bigg\{d_g(p,y_0)+ \bigg(\sum_{j=1}^Nd_F(y_{j-1},x_j)+d_g(x_{j},y_j)\bigg)+d_F(x_{N},z)\bigg\},
\end{equation}
where $d_F$ is the distance function of the Riemannian manifold $(F,G|_F)$ and
\\
$\{y_0, \ldots, y_N, x_1, \ldots, x_N\} \subset \p M$. Notice that similar formula holds for $d_{\widetilde G}$, when $d_g$ is replaced with $d_{\widetilde g}$. If \eqref{eq:proof_of_(31)} holds then, it follows from equation \eqref{eq:boundary_dist_agree} that
\[
d_G(p,z)-d_{\widetilde G}(p,z)=h_p(z)=\hbox{constant with respect to $z$}.
\]
This implies \eqref{eq:dist_dif_on_N}, in the case when $p \in M$.

Finally we will prove \eqref{eq:proof_of_(31)}. Let $\epsilon>0$. Since $\p M$ is a smooth co-dimension 1 submanifold of $N$, it follows from the definition of the Riemannian distance function $d_G$, that there exists a piecewise smooth curve $c$ from $p$ to $q$, that crosses the boundary finitely many times, and whose length is $\epsilon$--close to $d_G(p,z)$. Then 
\[
d_g(p,y_0)+ \bigg(\sum_{j=1}^Nd_F(y_{j-1},x_j)+d_g(x_{j},y_j)\bigg)+d_F(x_{N},z)\leq \mathcal{L}_G(c)\leq d_G(p,z)+\epsilon,
\] 
where $\{y_0, \ldots, y_N, x_1, \ldots, x_N\} \subset \p M$ are the points where $c$ crosses the boundary. Taking $\epsilon$ to $0$ implies \eqref{eq:proof_of_(31)}.
%
%

\end{proof}

Due to the previous Lemma it follows from the Section 2.4 of  \cite{LaSa} that metric tensors $G$ and $\widetilde G$ coincide. We will sketch here the main ideas for this proof.

\medskip
First we prove that the geodesics of metrics $G$ and $\widetilde G$ agree up to reparametrization. Let $\tau_G:SN \to \R$ be the cut distance function of metric tensor $G$. By Lemma 2.9. of \cite{LaSa} the following equality holds for any $(z,v)\in SF^{int}$ 
\begin{equation}
\label{eq:image_of_geo_1}
\gamma^G_{z,-v}((0,\tau_G(z,-v))=\{p \in N:D_p(\cdot,z) \hbox{ is smooth at $z$ and grad$_G D_p(\cdot,z)$ at $z$ is $v$} \}. 
\end{equation}
Where $\gamma^G_{z,-v}$ is the geodesic of $G$ with initial conditions $(z,-v)$. Since $G=\widetilde G$ on $F^{int}$, the formulas  \eqref{eq:dist_dif_on_N} and \eqref{eq:image_of_geo_1} imply
\begin{equation}
\label{eq:image_of_geo_2}
\gamma^G_{z,-v}((0,\tau_G(z,-v))=\gamma^{\widetilde G}_{z,-v}((0,\tau_{\widetilde G}(z,-v)), \quad (z,v) \in SF^{int},
\end{equation}
where $\tau_{\widetilde G}$ is the cut distance function of $\widetilde G$. Therefore, for any $(z,v) \in SF^{int}$ there exists a diffeomorphism $\alpha_{z,v}:(0,\tau_G(z,-v)) \to (0,\tau_{\widetilde G}(z,-v))$ such that
\begin{equation}
\label{eq:image_of_geo_3}
\gamma^G_{z,-v}(t)=\gamma^{\widetilde G}_{z,-v}(\alpha_{z,v}(t)), \quad t \in (0,\tau_G(z,-v)), \:
 (z,v) \in SF^{int}.
\end{equation}

Let $p \in M^{int}$. We denote the exponential map of $G$ at $p$ by $\exp_p$. Then the following set is not empty,
$$
\Omega_p:=\{rv\in T_pN: r>0, \:  v=\exp_p^{-1}(z), \: p \in \sigma(z,v), \: (z,v) \in SF^{int}\}^{int},
$$
and, moreover, if we denote the exponential map of $\widetilde G$ at $p$ by $\widetilde \exp_p$. In view of  \eqref{eq:image_of_geo_3} we have
\begin{equation}
\label{eq:geodesic_in_M}
\Omega_p=\{rv\in T_pN: r>0, \:  v=\widetilde \exp_p^{-1}(z), \: p \in \sigma(z,v), \: (z,v) \in SF^{int}\}^{int}.
\end{equation}

Let $(U,x)$ be a local coordindate chart of $M^{int}$. We denote the Christoffel symbols of $G$ and $\widetilde G$ as $\Gamma$ and $\widetilde \Gamma$, respectively.  By \eqref{eq:image_of_geo_3}, \eqref{eq:geodesic_in_M} 
and Proposition 2.13 of \cite{LaSa} there exists a smooth $1$--form $\beta$ on $U$ such that
$$
\Gamma^k_{ij}(x)-\widetilde \Gamma^k_{ij}(x)=\delta^k_i\beta_j(x)+\delta^k_j\beta_i(x),
$$
where $\delta^k_j$ is the Kronecker delta. This and Lemma 2.14 of \cite{LaSa} imply that the geodesics of metric tensors $G$ and $\widetilde G$ agree up to reparametrization. See also \cite{matveev2012geodesically} for the similar result. We arrive at.

\begin{lemma}
Suppose that $N, F, G$ and $\widetilde G$ are as above. Then $G=\widetilde{G}$ in all of $N$.
\label{Le:geodesic eq -> metrics are the same}
\end{lemma}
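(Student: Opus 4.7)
The plan is to leverage the three ingredients already assembled in the discussion preceding the statement: (i) the metric agreement $G=\widetilde G$ on the exterior $F$; (ii) the projective equivalence $\Gamma^k_{ij}-\widetilde\Gamma^k_{ij}=\delta^k_i\beta_j+\delta^k_j\beta_i$ on $M^{int}$, with a smooth $1$-form $\beta$; and (iii) the global distance-difference identity \eqref{eq:dist_dif_on_N} on $N$. If I can force $\beta$ to vanish on $M^{int}$, then the Christoffel symbols of $G$ and $\widetilde G$ coincide there, and since the full Taylor jets of the two metrics agree on $\p M$ by Lemma \ref{Le:jet}, ODE uniqueness for the geodesic equation in boundary normal coordinates yields $G=\widetilde G$ throughout $M$; equality on $F$ holds by construction, so the two metrics coincide on all of $N$.

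The key observation is that \eqref{eq:dist_dif_on_N} can be recast as
\[
c(p):=d_G(p,z)-d_{\widetilde G}(p,z),\qquad p\in N,
\]
where the right-hand side is independent of $z\in F$; agreement on $F$ gives $c\equiv 0$ there. Fix $(z,v)\in SF^{int}$ such that the $G$-geodesic $\gamma^G_{z,-v}$ reaches a point $p\in M^{int}$ before its $G$-cut time. Using $d_G(\gamma^G_{z,-v}(t),z)=t$ and, by \eqref{eq:image_of_geo_3} (the $\widetilde G$-geodesic from $z$ traces the same curve at unit $\widetilde G$-speed), $d_{\widetilde G}(\gamma^G_{z,-v}(t),z)=\alpha_{z,v}(t)$, I obtain the explicit formula
\[
\alpha_{z,v}(t)=t-c(\gamma^G_{z,-v}(t)).
\]
Differentiation gives $\alpha'_{z,v}(t)=1-dc(\dot\gamma^G_{z,-v}(t))$, which is precisely the $\widetilde G$-norm of the $G$-unit tangent vector $\dot\gamma^G_{z,-v}(t)$.

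To conclude $\beta\equiv 0$, the expression for $\alpha'_{z,v}$ is substituted into the projective Christoffel identity. At each $p\in M^{int}$ the tangent vectors produced this way fill an open cone in $T_pN$; this openness is exactly what the nonemptiness of the interior of $\Omega_p$ guarantees, which in turn is ensured by the visibility condition. The Liouville-type rigidity of Lemma 2.14 in \cite{LaSa} then converts the pointwise identity into the PDE $\beta=dc$ on $M^{int}$, and combined with $c\equiv 0$ on $F$ and the jet matching across $\p M$ from Lemma \ref{Le:jet}, this forces $c\equiv 0$ and hence $\beta\equiv 0$ on $M^{int}$. The main obstacle is precisely this rigidity step: without the test function $c$ propagated along every geodesic entering $M$ from $F^{int}$, projectively equivalent Riemannian metrics need not coincide, and it is the visibility condition that keeps enough such probing geodesics available throughout the interior.
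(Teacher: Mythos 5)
Your first half is sound and actually contains the germ of a clean argument: by \eqref{eq:dist_dif_on_N} the function $c(p)=d_G(p,z)-d_{\widetilde G}(p,z)$ is independent of $z\in F$, it vanishes on $F$ (take $z=p$), and along a $G$-geodesic entering $M$ from $F^{int}$ the identity $\alpha_{z,v}(t)=t-c(\gamma^G_{z,-v}(t))$, hence $\|\dot\gamma^G_{z,-v}(t)\|_{\widetilde G}=\alpha'_{z,v}(t)=1-dc(\dot\gamma^G_{z,-v}(t))$, is correct on the relevant parameter range. The gap is in how you try to close the argument. First, ``substituting into the projective Christoffel identity'' does not produce $\beta=dc$: for projectively related connections the reparametrization satisfies $\alpha''/\alpha'=2\beta(\dot\gamma^G)$, so what you actually get is a relation between $\beta$ and the \emph{Hessian} of $c$, not its differential; Lemma 2.14 of \cite{LaSa} is the statement that the Christoffel relation is equivalent to geodesic correspondence, not a rigidity lemma that extracts a potential for $\beta$. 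Second, even granting $\beta=dc$ on $M^{int}$, the conditions $c\equiv 0$ on $F$ and the jet matching of Lemma \ref{Le:jet} do not force $c\equiv 0$ in $M^{int}$: exactness of $\beta$ with a potential vanishing outside $M$ is compatible with nonzero $c$ inside, so the chain ``$\beta=dc$ $\Rightarrow$ $c\equiv0$ $\Rightarrow$ $\beta\equiv0$'' has no justification as written.

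The repair is available from your own identity and bypasses $\beta$ entirely: at each $x\in M^{int}$ you have $\widetilde G_x(w,w)=\bigl(1-dc_x(w)\bigr)^2$ for all $G$-unit vectors $w$ in the open cone guaranteed by the nonemptiness of $\Omega_p$. Comparing the two sides as polynomials in $w$ restricted to the irreducible quadric $G_x(w,w)=1$ (the difference must be a constant multiple of $G_x(w,w)-1$), the homogeneous degree-one part forces $dc_x=0$ and the degree-two and degree-zero parts then give $\widetilde G_x=G_x$; together with $G=\widetilde G$ on $F$ this proves the lemma. Note that this is a genuinely different mechanism from the paper's proof, which instead feeds the geodesic correspondence into the Topalov--Matveev first integral $I_0$ of \eqref{Matveev formula}, evaluates it on geodesics issued from $TF^{int}$ where $G=\widetilde G$, and concludes from constancy of $I_0$ along the $G$-flow. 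As submitted, however, your argument does not reach the conclusion.
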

\begin{proof}
Since geodesics of metric tensors $G$ and $\widetilde G$ agree up to reparametrization the main result of \cite{topalov2003geodesic} shows that the function
\begin{equation}
\label{Matveev formula}
I_0((x,v))=\bigg(\frac{\det (G(x)) }{\det(\widetilde{G}(x))}\bigg)^{\frac{2}{n+1}} \widetilde{G}(x,v), \quad (x,v) \in TN,
\end{equation}
where  $\widetilde {G}(x,v)=\widetilde{G}_{jk}(x)v^jv^k$, is constant on the geodesic flow of $G$. 
Note that the function $F(x):= \frac{\text{det} (G(x)) }{\text{det}(\widetilde{G}(x))}$ is coordinate invariant. 

Let $\varphi_t:SN \to SN$, $t\in \R$ be the geodesic flow of $G$ and $\pi:TN \to N$ the projection onto the base point. 
Since $G=\widetilde G$ on $F^{int}$, we have
$$
G(\varphi_0(z,v)) =\|v\|^2_G=I_0(\varphi_0(z,v), \quad (z,v) \in TF^{int} .
$$
Therefore for any $t \in \R$ and for any $(z, v) \in TF^{int}  \setminus \{0\}$ the following holds
$$
G(\varphi_t(z,v)) =\|v\|^2_G=I_0(\varphi_t(z,v)=F(\pi(\varphi_t(z,v))\widetilde G(\varphi_t(z,v)).
$$
This implies the claim. For more details, see Lemma 2.15 of \cite{LaSa}.
\end{proof}

We conclude that the proof of Theorem \ref{th:main} follows from Propositions \ref{th:topology}, \ref{th:diffeo} and Lemma \ref{Le:geodesic eq -> metrics are the same}.

\subsection*{Acknowledgements}
We would like to express our gratitude for Matti Lassas and Sergei Ivanov for the excellent suggesstion during the preparation of this paper. M. V. dH. was partially supported by the Simons Foundation under the MATH + X program, the National Science Foundation under grant DMS-1559587, and by members of the Geo-Mathematical Imaging Group at Rice University. TS was supported by the Simons Foundation under the MATH + X program. 
\color{black}

\bibliographystyle{abbrv} 
\bibliography{bibliography}

\def\cprime{$'$}
\begin{thebibliography}{10}

\bibitem{alexander1981geodesics}
R.~Alexander and S.~Alexander.
\newblock Geodesics in {R}iemannian manifolds-with-boundary.
\newblock {\em Indiana University Mathematics Journal}, 30(4):481--488, 1981.

\bibitem{deHoop1}
M.~V. de~Hoop, S.~F. Holman, E.~Iversen, M.~Lassas, and B.~Ursin.
\newblock Recovering the isometry type of a {R}iemannian manifold from local
  boundary diffraction travel times.
\newblock {\em Journal de Math{\'e}matiques Pures et Appliqu{\'e}es},
  103(3):830--848, 2015.

\bibitem{duistermaat1972fourier}
J.~J. Duistermaat and L.~H{\"o}rmander.
\newblock Fourier integral operators. ii.
\newblock {\em Acta mathematica}, 128(1):183--269, 1972.

\bibitem{grechka2015relative}
V.~Grechka, A.~De~La~Pena, E.~Schissel{\'e}-Rebel, E.~Auger, and P.-F. Roux.
\newblock Relative location of microseismicity.
\newblock {\em Geophysics}, 80(6):WC1--WC9, 2015.

\bibitem{greenleaf1993recovering}
A.~Greenleaf and G.~Uhlmann.
\newblock Recovering singularities of a potential from singularities of
  scattering data.
\newblock {\em Communications in mathematical physics}, 157(3):549--572, 1993.

\bibitem{ivanov2018distance}
S.~Ivanov.
\newblock Distance difference representations of riemannian manifolds.
\newblock {\em arXiv preprint arXiv:1806.05257}, 2018.

\bibitem{Katchalov2001}
A.~Katchalov, Y.~Kurylev, and M.~Lassas.
\newblock {\em Inverse boundary spectral problems}, volume 123 of {\em
  Monographs and Surveys in Pure and Applied Mathematics}.
\newblock Chapman \& Hall/CRC, Boca Raton, FL, 2001.

\bibitem{kupka2006focal}
I.~Kupka, M.~Peixoto, and C.~Pugh.
\newblock Focal stability of {R}iemann metrics.
\newblock {\em Journal fur die reine und angewandte Mathematik (Crelles
  Journal)}, 2006(593):31--72, 2006.

\bibitem{kurylev1997multidimensional}
Y.~Kurylev.
\newblock Multidimensional {G}elfand inverse problem and boundary distance map.
\newblock {\em Inverse Problems Related with Geometry (ed. H. Soga)}, pages
  1--15, 1997.

\bibitem{LaSa}
M.~Lassas and T.~Saksala.
\newblock Determination of a {R}iemannian manifold from the distance difference
  functions.
\newblock {\em Asian journal of mathematics (to appear), arXiv preprint
  arXiv:1510.06157}, 2015.

\bibitem{lassas2018reconstruction}
M.~Lassas, T.~Saksala, and H.~Zhou.
\newblock Reconstruction of a compact manifold from the scattering data of
  internal sources.
\newblock {\em Inverse Problems \& Imaging}, 12(4):993--1031, 2018.

\bibitem{matveev2012geodesically}
V.~S. Matveev.
\newblock Geodesically equivalent metrics in general relativity.
\newblock {\em Journal of Geometry and Physics}, 62(3):675--691, 2012.

\bibitem{PoupinetEF-1984}
G.~Poupinet, W.~Ellsworth, and J.~Frechet.
\newblock Monitoring velocity variations in the crust using earthquake
  doublets: An application to the {C}alaveras {F}ault, {C}alifornia.
\newblock {\em J. Geophys. Res.-Sol. Ea.}, 89(B7):5719--5731, 1984.

\bibitem{sakai1996riemannian}
T.~Sakai.
\newblock {\em Riemannian geometry}, volume 149.
\newblock American Mathematical Soc., 1996.

\bibitem{sharafutdinov2012integral}
V.~A. Sharafutdinov.
\newblock {\em Integral geometry of tensor fields}, volume~1.
\newblock Walter de Gruyter, 2012.

\bibitem{stefanov2009}
P.~Stefanov and G.~Uhlmann.
\newblock Local lens rigidity with incomplete data for a class of non-simple
  riemannian manifolds.
\newblock {\em J. Differential Geom.}, 82(2):383--409, 06 2009.

\bibitem{topalov2003geodesic}
P.~Topalov and V.~S. Matveev.
\newblock Geodesic equivalence via integrability.
\newblock {\em Geometriae Dedicata}, 96(1):91--115, 2003.

\bibitem{waldhauser2000double}
F.~Waldhauser and W.~L. Ellsworth.
\newblock A double-difference earthquake location algorithm: Method and
  application to the northern hayward fault, california.
\newblock {\em Bulletin of the Seismological Society of America},
  90(6):1353--1368, 2000.

\bibitem{ZhangT-2006}
H.~Zhang and C.~Thurber.
\newblock Development and applications of double-difference seismic tomography.
\newblock {\em Pure A. Geophys.}, 163(2):373--403, 2006.

\bibitem{ZhangT-2003}
H.~Zhang and C.~H. Thurber.
\newblock Double-difference tomography: The method and its application to the
  {H}ayward fault, {C}alifornia.
\newblock {\em B. Seismol. Soc. Am.}, 93(5):1875--1889, 2003.

\bibitem{zhou2012recovery}
X.~Zhou.
\newblock Recovery of the ${C}^{\infty}$ jet from the boundary distance
  function.
\newblock {\em Geometriae Dedicata}, 160(1):229--241, 2012.

\end{thebibliography}
\end{document}